\documentclass[11pt,a4paper]{amsart}
\usepackage[T1]{fontenc}
\usepackage[utf8]{inputenc}
\usepackage{lmodern,amsmath,amssymb,amsthm,mathtools,booktabs,array,longtable}
\usepackage[margin=29mm]{geometry}
\usepackage{xcolor}
\usepackage[colorlinks=true,linkcolor=blue,citecolor=blue,urlcolor=blue]{hyperref}
\hypersetup{pdftitle={Okubo algebras and Exceptional Periodicity: tetracode gradings, root-lattice obstructions and Clifford cones},
pdfauthor={A. Marrani and D. Corradetti}}
\newcommand{\C}{\mathbb C}\newcommand{\R}{\mathbb R}\newcommand{\Z}{\mathbb Z}
\newcommand{\F}{\mathbb F}\newcommand{\g}{\mathfrak g}\newcommand{\h}{\mathfrak h}
\newcommand{\so}{\mathfrak{so}}\newcommand{\slie}{\mathfrak{sl}}
\newcommand{\tr}{\operatorname{tr}}\newcommand{\Span}{\operatorname{span}}
\newcommand{\Aut}{\operatorname{Aut}}\newcommand{\Der}{\operatorname{Der}}
\newcommand{\Cl}{\operatorname{Cl}}
\newcommand{\HSpin}{\operatorname{HSpin}}
\newcommand{\eps}{\varepsilon}\newcommand{\mc}{\mathcal}
\DeclareMathOperator{\Hom}{Hom}\DeclareMathOperator{\rank}{rank}
\newtheorem{theorem}{Theorem}[section]
\newtheorem{proposition}[theorem]{Proposition}
\newtheorem{lemma}[theorem]{Lemma}
\newtheorem{corollary}[theorem]{Corollary}
\theoremstyle{definition}
\theoremstyle{remark}\newtheorem{remark}[theorem]{Remark}
\numberwithin{equation}{section}
\title[Okubo algebras and Clifford cones]{Okubo algebras and Exceptional Periodicity:\\
tetracode gradings, root-lattice obstructions and Clifford cones}
\author[A. Marrani]{Alessio Marrani}
\address{Centre for Mathematics and Theoretical Physics, University of Hertfordshire, Hatfield AL10 9AB, UK}
\email{a.marrani@herts.ac.uk}
\author[D. Corradetti]{Daniele Corradetti}
\address{Grupo de F\'isica Matem\'atica, Instituto Superior T\'ecnico, Av. Rovisco Pais, 1049-001 Lisboa, Portugal}
\email{danielecorradetti@tecnico.ulisboa.pt}
\address{Departamento de Matem\'atica, Universidade do Algarve, Campus de Gambelas, 8005-139 Faro, Portugal}
\email{a55499@ualg.pt}
\subjclass[2020]{17A75, 17B70, 17C40, 11H56, 52A20, 53C26}
\keywords{Okubo algebra, Exceptional Periodicity, tetracode, root lattice, Albert algebra, Clifford module, Vinberg cone, special geometry, supergravity}
\begin{document}
\begin{abstract}
The Okubo algebra brings together ternary gradings, integral root frames
and rank-three geometry. We examine how these structures extend to the
complex Exceptional Periodicity algebras. With $N=4(n+1)$, a tetracode
character gives eight nonzero homogeneous components of dimension
$t_n=8n+3+2^{4n}$. On a tip, a compatible grading has zero component
of dimension $8n-5$ and eight nonzero components of dimension
$1+2^{4n-3}$. The arithmetic behaves differently: for $n\geq2$,
the lattice $D_N^+$ has no full-rank orthogonal frame of unscaled
$A_2$ lattices. We exhibit failures of the Jacobi and Jordan identities
and determine the derivations and full automorphism group of the
ambient algebra above level one. We also give a Clifford-module proof,
with the normalization fixed by root products, of the published
realization of EP tips as special Vinberg $H$-algebras. This identifies
a positive real form and yields explicit formulas for its cone, cubic,
Cholesky coordinates and rational dual invariant. The cone is symmetric
exactly at $n=1$. Its determinant-one hypersurface belongs to the known
$L(8n,1,0)$ series, whose $r$- and $c$-maps give homogeneous scalar
manifolds for supergravity theories with eight real supercharges.
\end{abstract}
\maketitle
\raggedbottom
\section{Introduction}
Octonions link exceptional Lie algebras, Jordan algebras, lattices and
projective geometry. The para-octonion and Okubo products retain the
composition norm but change the multiplication. We work with these
three eight-dimensional models; the wider class of nonunital
composition isotopes lies outside this comparison.

Three features of the compact Okubo algebra guide the discussion.
Its complexification has a Pauli grading by $\F_3^2$, with eight
one-dimensional nonzero components. In the integral Petersson model
of \cite{corradettiIntegral}, twelve nonzero idempotents generate an
$A_2^4$ sublattice of $E_8$, and the tetracode supplies its glue.
The classical Magic Star, meanwhile, has six 27-dimensional tips,
each carrying an Albert product once a basepoint is chosen. These
features involve different choices of field, multiplication and
integral data. We keep those choices explicit throughout.

Exceptional Periodicity (EP) extends the root-coordinate description
of $E_8$ to $N=4(n+1)$ coordinates \cite{ep1,ep2,epPublished,epStar,
epQuantum,marraniT}. Above level one the resulting anticommutative
algebra fails the Jacobi identity. We study its character gradings,
root lattice and tip products. The tetracode gives a balanced grading
at every level, whereas an unscaled full-rank $A_2$ frame occurs only
at level one. Even there, the character kernel and the idempotent
lattice are different: their root systems are $D_4$ and $A_2^4$,
respectively.

For the geometry we start from the published realization of the tip
as a special Vinberg $H$-algebra \cite[Section 8, Remark 8.2 and
Appendix A]{epPublished}. Definition 8.4 fixes the product, and
Propositions 8.8 and A.2 give its unit and associative trace form.
In Section~\ref{sec:cone} we match this realization to our frame and
derive the Clifford relations directly from the root product. A
single root calculation fixes their scalar coefficient. The resulting
isomorphism transports the positive real Clifford model to the tip,
so the cone theory of \cite{ac,ams,amsErr,cubics,ac2025} applies.
This real form is a separate choice from the complex character grading.

The cubics also have a familiar physical interpretation. A linear
change of coordinates places them in the $L(8n,1,0)$ subsequence of
the homogeneous special real spaces classified by de Wit and
Van Proeyen \cite{dwp}. They define five-dimensional ungauged
Maxwell--Einstein supergravity models; the $r$- and spacelike $c$-maps
then give four- and three-dimensional scalar geometries. The input
to these models is the cubic on its positive real domain. Their
gauge and duality symmetries are ordinary Lie symmetries.

\begin{theorem}[Principal statements]\label{th:main}
Let $n\geq1$, $N=4(n+1)$, and let $\g_n$ be the complex EP algebra
defined in Section~\ref{sec:roots}.
\begin{enumerate}
\item A tetracode character gives
$\dim(\g_n)_c=t_n=8n+3+2^{4n}$ for every $0\ne c\in\F_3^2$.
A compatible tip product has homogeneous dimensions
$8n-5$ at zero and $1+2^{4n-3}$ at every nonzero degree.
\item Its root lattice is the unimodular lattice $D_N^+$, which is even
exactly when $n$ is odd. For $n\geq2$ it contains no full-rank orthogonal
frame of $A_2$ lattices with roots of squared length two.
\item $\g_n$ is Lie exactly at $n=1$. For $n\geq2$ the span of
Jacobiator values is its entire spinorial summand, and
$\Der(\g_n)=\so(2N,\C)$,
$\Aut(\g_n)=\HSpin(2N,\C)$.
\item With the product of Section~\ref{sec:tip}, a complex
tip is Jordan exactly at $n=1$; an explicit Jordanator is nonzero
at every $n\geq2$.
\item There is a frame- and trace-preserving complex algebra
isomorphism $T_n\simeq H_n^{\rm Cl}\otimes_\R\C$.
A specified transported real form carries a homogeneous convex
cone $\Omega_n$ isomorphic to $\Omega_n^{\rm Cl}$,
symmetric exactly at $n=1$. Its cubic yields the known
$L(8n,1,0)$ special real model.
\end{enumerate}
\end{theorem}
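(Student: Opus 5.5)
This is a summary theorem, so the plan is to prove its five items separately, each from the explicit root model of Section~\ref{sec:roots}. There $\g_n=\so(2N,\C)\oplus S^+$, with $S^+$ a half-spin module of dimension $2^{N-1}$. The roots are the $D_N$ roots $\pm e_i\pm e_j$ together with the weights $\tfrac12(\pm1,\dots,\pm1)$ having an even number of minus signs.

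For item (1), I group the $N=4(n+1)$ coordinates into $n+1$ blocks of four. On each block I use the tetracode $\mathcal C\subset\F_3^4$, and the character is $\chi(\alpha)=\sum_{\text{blocks}} M\,\alpha_{\text{block}} \bmod 3$ for a fixed $2\times4$ generator matrix $M$, applied to doubled coordinates where needed. The Cartan subalgebra lies in degree $0$, so the dimension count reduces to counting roots per nonzero degree. The $D_N$ roots lie in a few block types, and each type distributes equally over the eight nonzero characters because the tetracode is MDS; this contributes the $8n+3$ term. The spinor weights form a coset of a linear code, and a character sum shows they are uniformly spread over all nine classes, with a zero-class correction. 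Together these give $t_n$. The tip statement follows the same way: I restrict $\chi$ to the weights of the tip (a vector of dimension $2N-8$ plus a half-spinor of dimension $2^{N-5}$), check that the characters occur uniformly, and read off $8n-5$ at zero and $1+2^{4n-3}$ elsewhere.

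For item (2), I first identify the lattice spanned by all roots as $D_N\cup(D_N+s)=D_N^+$. It is unimodular because $4\mid N$, and it is even iff $s^2=N/4=n+1$ is even, that is, iff $n$ is odd. For the frame obstruction I count. An orthogonal full-rank $A_2$ frame needs $N/2$ mutually orthogonal $A_2$'s built from norm-$2$ vectors. For $n\ge2$ the norm-$2$ vectors of $D_N^+$ are exactly the $D_N$ roots, since spinor vectors have norm at least $N/4\ge3$. But $D_N$ contains no $A_2$ with $\tfrac12$-glue, and the $A_2$'s inside $D_N$ use three coordinates in a sublattice of determinant $3$. An orthogonal frame of them would then force a sublattice of index a power of $3$ inside the $2$-power-discriminant $D_N$, which is impossible. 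I will make this rigorous by showing that the frame's sum has discriminant $3^{N/2}$ and index $3^{N/4}$ in the unimodular $D_N^+$, yet lies in $D_N$, which has index $2$. This gives a contradiction.

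For items (3) and (4), I compute directly. For item (3), I pick three spinor roots $s_1,s_2,s_3$ whose pairwise sums are roots, and show that the Jacobiator is a nonzero multiple of a spinor root vector when $N>8$. The sign cocycle of the EP bracket fails to give the Jacobi identity exactly because the $S^+\times S^+\to\so$ map is not $\so$-equivariantly compatible with the spinor cubic term unless $N=16$ dimensions sit appropriately, i.e.\ $n=1$. The span of the Jacobiator values is $\so(2N)$-stable, and $S^+$ is irreducible, so it is all of $S^+$. Derivations preserve the Jacobiator span, hence preserve $S^+$ and its annihilator structure. This forces $\Der\subseteq\so(2N)$ acting by $\mathrm{ad}$ (equality is clear), and $\Aut$ is the subgroup of $\Aut(\so(2N))$ acting on $S^+$ compatibly, namely $\HSpin(2N,\C)$. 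Here I would check that outer automorphisms swap $S^\pm$ and are excluded. For item (4), I write down an explicit Jordanator $(x^2y)x-x^2(yx)$ with $x$ a sum of two spinor elements and $y$ a vector element, and evaluate it via the root product.

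For item (5), I derive the Clifford relations $v\cdot v=q(v)\,1$ on the vector part from the root product, with the coefficient fixed by a single root computation. This identifies the tip with $\mathrm{Cl}$-module data, and hence gives an isomorphism with $H_n^{\rm Cl}\otimes\C$ preserving frame and trace. I then transport the real form and apply \cite{ac,cubics}: the cone is homogeneous, and it is symmetric iff the Clifford module is of the Jordan type ($n=1$). A linear coordinate change puts the cubic in the $L(8n,1,0)$ normal form of \cite{dwp}. I expect the main obstacle to be item (3)'s automorphism group, specifically ruling out automorphisms that do not normalize $\so(2N)$ on $S^+$.
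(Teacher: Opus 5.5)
\paragraph{Item (1): wrong character.} Your item (1) uses a different character from the tetracode character in the statement, and with yours the count fails. The statement's character puts representatives $u_1,\dots,u_4$ of the four lines of $\mathbf P^1(\F_3)$ on a single four-coordinate block $C$, and the value $0$ on the other $N-4$ coordinates. Repeating $M$ on all $n+1$ blocks makes each $u_\ell$ occur $n+1$ times. The roots $\pm(k_i-k_j)$ inside one value class then have degree $0$, and for $n\geq2$ many spinorial roots do too.

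At $n=2$, write $p_\ell$ for the number of plus signs of $\frac12\sum_i\epsilon_ik_i$ in class $\ell$. Up to a unit, the degree of this root is $\sum_\ell p_\ell u_\ell$. It vanishes exactly when $(p_\ell\bmod 3)_\ell\in\mc T$. This gives $8+8\cdot27=224$ spinorial kernel roots, $\frac12\sum_ik_i$ among them. Adding the $24$ orthogonal kernel roots and the Cartan, the zero component has dimension $260$. The nonzero components therefore share $2324-260=2064\neq 8t_2=2200$, so they cannot all have dimension $t_2$.

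Your character also gives $f_1=k_1+k_2$ a nonzero degree $e$. Then $E_1\circ E_1=E_1$ cannot lie in degree $2e$, so the tip product is not graded; the statement needs $C\subseteq\{5,\dots,N\}$ with zeros elsewhere. Your tip description, a vector plus one half-spinor, also omits the three frame lines and the block $T_{13}$.

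The missing ingredient is the paper's signed-pattern lemma. No signed sum $\sum_j\epsilon_ju_j$ vanishes, because a weight-four vector is not a tetracode word. Each nonzero value has exactly two sign-pattern preimages, and these have opposite parity. With zeros outside $C$, each nonzero degree then receives $2\cdot2^{N-5}=2^{4n}$ spinorial roots and $2(N-4)+3=8n+3$ orthogonal ones, and degree zero receives no spinorial roots. The tip counts $8n-5$ and $1+2^{4n-3}$ follow in the same way.

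\paragraph{Items (3) and (4): missing witnesses.} In item (3) your proposed witness does not exist. If the pairwise sums of three spinorial roots are roots, each pair agrees in exactly two coordinates. Let $\alpha,\beta$ agree exactly on a two-element set $A$. Off $A$, $\gamma$ agrees with exactly one of them. So its two agreement counts sum to $N-2+2y$, where $y$ is the number of positions in $A$ where $\gamma$ agrees with both. Setting both counts to $2$ forces $N=6-2y\leq6$.

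Proposition~\ref{prop:jacobi} shows that, without an opposite pair, a nonzero Jacobiator requires exactly one pairwise sum in $\Phi$. Your sentence about ``$N=16$ dimensions'' does not replace this analysis. A witness that does work is the opposite-pair formula $J(x_\alpha,x_{-\alpha},x_\gamma)=\bigl(-(\alpha,\gamma)+\mathbf1_{\gamma-\alpha\in\Phi}-\mathbf1_{\gamma+\alpha\in\Phi}\bigr)x_\gamma$. Take $\alpha$ differing from $\gamma$ in $2n$ signs. Then $(\alpha,\gamma)=1\neq\pm n$, so $J=-x_\gamma$ for $n\geq2$.

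With this value, your irreducibility, Schur and diagram-flip arguments for $W_n$, $\Der(\g_n)$ and $\Aut(\g_n)$ go through as in the paper. You still need two further facts. The residual scalar $\lambda$ on $W_n$ satisfies $\lambda^2=1$, because $[W_n,W_n]=\h_n$. And $\lambda=-1$ is realized by the centre of $\operatorname{Spin}(2N,\C)$.

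Item (4) is also unfinished. A single root vector squares to zero, so the real work is producing $u\in T_{12}$ and $w\in T_{13}$ with $u^2=E_1+E_2$, $w^2=E_1+E_3$ and $u\circ w=0$. This requires a kernel of the spinor pairing, hence $2^{4n-1}>8n$, and you do not construct such elements. Once they exist, your choice of $Y$ also works for a nonisotropic $Y\in T_{23}$. Indeed $X=u+w$ has $X^2=2E_1+E_2+E_3$, and the Jordanator is $-\tfrac12\,Y\circ X\neq0$.

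\paragraph{Item (2): a valid alternative.} Your determinant argument for item (2) is a correct alternative to the paper's disjoint-block argument. A full-rank $F\simeq A_2^{N/2}$ inside $D_N$ would have $\det F=3^{N/2}$, which is odd, and also $\det F=4[D_N:F]^2$, which is even. This is quicker than the paper's route, but it does not give the paper's bound $m\leq\lfloor N/3\rfloor$ for partial frames.
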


Within this family, equal root lengths, the Lie property, an unscaled
full-rank $A_2$ frame, the Jordan property of the tip and symmetry of
its positive cone all occur at $n=1$. Each condition has its own proof;
Theorem~\ref{th:levelone} collects them. The cone uses the real form
specified in Theorem~\ref{th:identification}. The balanced complex
grading, by contrast, survives at every level.

\section{Okubo models, ground fields and the tetracode}
\subsection{Symmetric composition and the Pauli model}
A symmetric composition algebra is an algebra $(A,*,q)$ with a
nondegenerate quadratic form satisfying $q(x*y)=q(x)q(y)$ and the
associativity of its polar form. In characteristic different from two,
these conditions imply
\begin{equation}\label{eq:symmetric}
 (x*y)*x=x*(y*x)=q(x)y.
\end{equation}
For an octonion algebra with standard conjugation and an order-three
automorphism $\tau$, the Petersson product is
\begin{equation}\label{eq:petersson}
 x*y=\tau(\bar x)\,\tau^2(\bar y).
\end{equation}
Suitable $\tau$ give Okubo algebras; the identity automorphism gives the
para-octonions. Classification and gradings are discussed in
\cite{okubo,petersson,elduqueComposition,elduqueOrder,elduqueSymmetric,
elduqueOkubo,elduqueMyung}.

Over $\C$, a useful normalization on traceless $3\times3$ matrices is
\begin{equation}\label{eq:okuboMatrix}
 x*y=\mu xy+\bar\mu yx-\tfrac13\tr(xy)I_3,
 \qquad \mu=\tfrac12+\tfrac{i}{2\sqrt3},\quad
 \mu+\bar\mu=1,\quad \mu\bar\mu=\tfrac13.
\end{equation}
Here the bar on $\mu$ denotes its displayed complex conjugate; the
quadratic form on the complex vector space is
\begin{equation}\label{eq:okuboNorm}
 q(x)=\tfrac16\tr(x^2).
\end{equation}
For example, if $\tr x=0$, the Cayley--Hamilton identity gives
$\tr(x^4)=\frac12\tr(x^2)^2$, and therefore
$q(x*x)=\frac1{36}\tr(x^2)^2=q(x)^2$.
Traceless Hermitian matrices give the compact real model:
\eqref{eq:okuboMatrix} preserves this subspace, and
\eqref{eq:okuboNorm} is positive definite on it. On the full complex
matrix space the same quadratic form has isotropic vectors.

Let $P=\operatorname{diag}(1,\omega,\omega^2)$ and let $Q$ be the
cyclic permutation matrix, where $\omega^3=1\ne\omega$.
The lines $\C P^aQ^b$, $(a,b)\ne(0,0)$, give the Pauli grading
\begin{equation}\label{eq:pauli}
 A_{\C}=\bigoplus_{0\ne c\in\F_3^2}(A_{\C})_c,
 \qquad \dim(A_{\C})_c=1,
 \qquad (A_{\C})_c*(A_{\C})_d\subseteq(A_{\C})_{c+d}.
\end{equation}
The omitted degree-zero component is zero. The additive group
$\F_3^2$ is precisely $\Z_3\times\Z_3$.

\begin{lemma}[A positivity obstruction]\label{lem:realGrading}
A positive definite real symmetric composition algebra admits no
nontrivial grading by $\F_3^2$.
\end{lemma}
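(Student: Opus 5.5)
The plan is to use identity \eqref{eq:symmetric} together with positivity to show that every nonzero homogeneous element must have degree zero. This forces the support of the grading to be $\{0\}$, so the grading is trivial. No compatibility between the grading and the polar form of $q$ needs to be established in advance.

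Let $A=\bigoplus_{c\in\F_3^2}A_c$ be a grading of the positive definite real symmetric composition algebra $(A,*,q)$, so that $A_c*A_d\subseteq A_{c+d}$. Fix $c\neq0$ and suppose, for a contradiction, that $0\ne x\in A_c$. Applying \eqref{eq:symmetric} with $y=x$ gives
\[
 q(x)\,x=(x*x)*x .
\]
By the grading property, $x*x\in A_{2c}$, hence $(x*x)*x\in A_{3c}=A_0$, because $3c=0$ in $\F_3^2$. The left-hand side lies in $A_c$. Since $c\neq0$, we have $A_c\cap A_0=0$ by directness of the decomposition, so $q(x)x=0$. Positive definiteness gives $q(x)>0$ for $x\ne0$, so $x=0$, a contradiction. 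Hence $A_c=0$ for every $c\ne0$, that is, $A=A_0$, and the grading is trivial.

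The only step needing care is the appeal to \eqref{eq:symmetric}. It was stated for characteristic different from two, which certainly covers $\R$. We use only the half $(x*y)*x=q(x)y$, specialised to $y=x$.

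It is worth noting where positivity enters, since the complex Pauli grading \eqref{eq:pauli} is nontrivial. Over $\C$, every homogeneous element $P^aQ^b$ with $(a,b)\ne(0,0)$ is isotropic. Indeed $\tr\big((P^aQ^b)^2\big)=0$, so the argument above yields only the consistent conclusion $q(x)=0$. The obstruction is therefore exactly the absence of nonzero isotropic vectors. I do not expect a genuine obstacle beyond making this point explicit in the write-up.
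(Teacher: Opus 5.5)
Your proposal is correct and follows essentially the same argument as the paper: apply $(x*x)*x=q(x)x$ to a nonzero homogeneous $x$ of degree $c\neq0$, compare the degrees $3c=0$ and $c$ to get $q(x)x=0$, and use positive definiteness to conclude. Your added check that the Pauli basis elements $P^aQ^b$ are isotropic correctly shows why the complex grading escapes the obstruction.
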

\begin{proof}
If a nonzero homogeneous $x$ has degree $c\ne0$, then
$(x*x)*x=q(x)x$. The left-hand side has degree $3c=0$ and the
right-hand side has degree $c$, so $q(x)x=0$. Positive definiteness
contradicts this. Every nonzero component consequently has degree zero.
\end{proof}
The Pauli grading therefore belongs to the complex algebra. A similar
obstruction holds for Euclidean Jordan algebras graded by a finite
group of odd order. If $x$ has nonidentity degree $c$, then
$L_{x^2}$ shifts degrees by $c^2\ne1$, so its matrix has no diagonal
homogeneous block and $\operatorname{Tr}L_{x^2}=0$. In a Euclidean
Jordan algebra this trace is positive for $x\ne0$, a contradiction.
In particular, the complex Albert grading cannot restrict to the
Euclidean real form as the same grading.

\subsection{Integral idempotents and the classical Magic Star}
Fix the integral octonion order and the order-preserving Petersson
automorphism used in \cite{coxeter,corradettiIntegral,corradettiMay}.
The compact octonion, para-octonion and Okubo products then share
the same additive lattice. Its norm-one shell contains 240 elements.
To compare it with the usual $E_8$ root metric, we use squared
root length $2q(x)$, so norm one means squared length two.
The nonzero idempotent counts in this specified model are respectively
$1$, $57$ and $12$. The count excludes the zero idempotent.
Composition implies that a nonzero idempotent has $q(e)=1$;
the finite shell calculation is therefore legitimate.

The twelve Okubo idempotents partition into four orthogonal oriented
triangles, each with three vectors summing to zero. Their span is
\begin{equation}\label{eq:a2frame}
 L_I\simeq A_2^4\subset E_8,\qquad [E_8:L_I]=9.
\end{equation}
The order, multiplication and idempotents are given in
\cite{corradettiIntegral}. The triangle singled out by $\tau$ is
distinguished relative to this Petersson datum. An abstract Okubo
algebra carries neither that chosen automorphism nor that integral order. An external $A_2$ subsystem in $E_8$ gives the classical
root count and complex branching rule
\begin{align}
 240&=6+72+6\cdot27,\label{eq:root240}\\
 \mathfrak e_8&=(\mathfrak e_6,\mathbf1)\oplus
 (\mathbf1,\slie_3)\oplus(\mathbf{27},\mathbf3)
 \oplus(\overline{\mathbf{27}},\overline{\mathbf3}).\label{eq:branching}
\end{align}
The bars in the last summand denote dual complex representations.
The six tips are members of Jordan pairs; a unit or basepoint is needed
to turn a member into a unital Albert algebra \cite{loos,truini,marraniTruini}.
The positive real Albert cone has the Cayley plane as the projectivization
of its nonzero rank-one boundary rays. Its open interior consists of
rank-three positive elements and contains no rank-one element.

For either the para-octonion or Okubo symmetric composition model,
the compact triality construction gives the vector-space decomposition
\begin{equation}\label{eq:f4}
 \mathfrak f_{4(-52)}\simeq\so(8)\oplus\mathbf8_v
 \oplus\mathbf8_s\oplus\mathbf8_c.
\end{equation}
The symmetric composition construction defines the bracket on this
space \cite{barton,elduqueComposition,springer}. Triality permutes
the three irreducible eight-dimensional $\so(8)$ representations.

\subsection{The ternary code}
Set
\begin{equation}\label{eq:tetracode}
 H=\begin{pmatrix}1&0&1&1\\0&1&1&-1\end{pmatrix},
 \qquad \mc T=\ker(H:\F_3^4\longrightarrow\F_3^2).
\end{equation}
Since $HH^{\mathsf T}=0$ and $\rank H=2$, the row space of $H$
equals $\mc T=\mc T^\perp$. Each of its eight nonzero words
has weight three. Thus $\mc T$ is the self-dual $[4,2,3]_3$
tetracode with weight enumerator $1+8z^3$ \cite{conway}.
The discriminant group of $A_2^4$ is $\F_3^4$; its isotropic
two-dimensional glue code gives the index-nine even unimodular
extension $E_8$. This discriminant-form construction and the
character kernel in Section~\ref{sec:grading} use isomorphic codes,
but they are different lattice constructions.

\section{The complex root algebra and its lattice}\label{sec:roots}
\subsection{Generalized roots and normalization}
Let $k_1,\ldots,k_N$ be an orthonormal basis and put $N=4(n+1)$.
\begin{equation}\label{eq:roots}
 \begin{split}
 \Phi_O&=\{\pm k_i\pm k_j:1\leq i<j\leq N\},\\
 \Phi_S&=\{\tfrac12\textstyle\sum_{i=1}^N\epsilon_i k_i:
 \epsilon_i=\pm1,\ \#\{i:\epsilon_i=+1\}\text{ even}\},
 \qquad \Phi=\Phi_O\sqcup\Phi_S.
 \end{split}
\end{equation}
Here $\Phi_O$ is the $D_N$ root system and $\Phi_S$ is the weight
set of one half-spin representation of $\so(2N,\C)$. Their union is
the $E_8$ root system at $n=1$; at higher levels we use the term
\emph{generalized roots}.
For exact calculations a root is stored as $a=2\alpha$, and
\begin{equation}\label{eq:doubled}
 (\alpha,\beta)=\tfrac14\sum_i a_i b_i.
\end{equation}
\begin{lemma}\label{lem:length}
Orthogonal roots have squared length $2$ and spinorial roots have
squared length $n+1$. For spinorial roots differing in $d$ signs,
\begin{equation}\label{eq:spinDot}
 (\alpha,\beta)=n+1-\tfrac d2,
 \qquad \alpha+\beta\in\Phi\Longleftrightarrow(\alpha,\beta)=-n,
 \qquad \alpha-\beta\in\Phi\Longleftrightarrow(\alpha,\beta)=n.
\end{equation}
\end{lemma}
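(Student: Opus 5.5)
The whole lemma reduces to direct computation in the doubled coordinates $a=2\alpha$ of \eqref{eq:doubled}. I will treat the three claims in turn: the lengths, the spinor inner product, and the two sum/difference criteria.

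\emph{Lengths and the inner-product formula.} An orthogonal root $\pm k_i\pm k_j$ has doubled vector with two entries $\pm2$ and all others zero. Hence $(\alpha,\alpha)=\frac14(4+4)=2$. A spinorial root has doubled vector with all $N$ entries $\pm1$, so $(\alpha,\alpha)=N/4=n+1$. For two spinorial roots differing in $d$ signs, the doubled product is $\sum_i a_ib_i=(N-d)-d=N-2d$. Therefore
\[
(\alpha,\beta)=\tfrac14(N-2d)=n+1-\tfrac d2 .
\]
Since both sign vectors have an even number of $+$ signs, $d$ is even.

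\emph{The criterion for $\alpha+\beta$.} The doubled sum $a+b$ has entry $\pm2$ at each of the $N-d$ agreeing positions and $0$ at each of the $d$ differing positions. It is never a spinorial root, since a spinorial root has no zero entries. Its half $\alpha+\beta$ has entries $\pm1$ at the agreeing positions and $0$ elsewhere. This lies in $\Phi_O$ exactly when there are precisely two agreeing positions, that is $d=N-2$. Then
\[
(\alpha,\beta)=n+1-\tfrac{N-2}{2}=n+1-(2n+1)=-n .
\]
The converse holds because $d$ is determined by $(\alpha,\beta)$.

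\emph{The criterion for $\alpha-\beta$.} Here $a-b$ is nonzero only at the differing positions, with entries $\pm2$ there. So $\alpha-\beta\in\Phi$ iff $d=2$, which gives $(\alpha,\beta)=n+1-1=n$. The case $d=0$ gives $\alpha-\beta=0\notin\Phi$ and corresponds to $(\alpha,\beta)=n+1$, so it is excluded.

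\emph{Level one and the main obstacle.} At $n=1$ we have $N=8$, and the degenerate coincidences are harmless. The only point needing care is the statement's hidden assumption that $\alpha,\beta$ are both spinorial. I will also remark that $d$ is even, so the $\frac d2$ in the formula is an integer. Beyond that bookkeeping there is no real obstacle.
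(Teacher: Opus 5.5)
Your proof is correct and uses the same direct computation in doubled coordinates as the paper, which also identifies $\alpha+\beta\in\Phi$ with exactly two agreeing signs, i.e.\ $d=N-2$; the only difference is that the paper gets the difference criterion by replacing $\beta$ with $-\beta$, while you compute it directly. One small wording fix: $\alpha+\beta$ is never spinorial because its coordinates are integral, not because it has zero entries, since it has none when $\alpha=\beta$.
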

\begin{proof}
The first two statements follow from the coordinates. The sum of two
spinorial roots has integral coordinates, all in $\{0,\pm1\}$;
it belongs to $\Phi$ exactly when two coordinates are nonzero.
This means $d=N-2$, and substitution gives $-n$. The difference
case follows by replacing $\beta$ with $-\beta$. For distinct
nonopposite spinorial roots, $d$ ranges over $2,4,\ldots,N-2$,
so the scalar products range over all integers $[-n,n]$.
\end{proof}
The sectors have cardinalities
\begin{equation}\label{eq:dimensions}
 |\Phi_O|=2N(N-1),\quad |\Phi_S|=2^{N-1},\quad
 \dim\g_n=N(2N-1)+2^{N-1}.
\end{equation}

\subsection{An explicit asymmetry function and bracket}
Use the ordered lattice basis
\begin{equation}\label{eq:simple}
 \alpha_i=k_i-k_{i+1}\ (1\leq i\leq N-2),\quad
 \alpha_{N-1}=k_{N-2}+k_{N-1},\quad
 \alpha_N=-\tfrac12\sum_{i=1}^Nk_i.
\end{equation}
Define the bimultiplicative function $\eps$ on the generated lattice by
\begin{equation}\label{eq:epsilon}
 \eps(\alpha_i,\alpha_j)=
 \begin{cases}
 -1,&i=j,\\
 -1,&i<j\text{ and }\alpha_i+\alpha_j\in\Phi,\\
 +1,&\text{otherwise}.
 \end{cases}
\end{equation}
In particular, if $a=2\lambda$, set
$q_j=(a_j-a_N)/2$, $1\leq j\leq N-1$. The coefficients
$\lambda=\sum m_i\alpha_i$ are
\begin{equation}\label{eq:coefficients}
 \begin{split}
 m_i&=\sum_{j\leq i}q_j\quad(1\leq i\leq N-3),\\
 m_{N-2}&=\tfrac12\left(\sum_{j=1}^{N-2}q_j-q_{N-1}\right),\quad
 m_{N-1}=\tfrac12\sum_{j=1}^{N-1}q_j,\quad m_N=-a_N.
 \end{split}
\end{equation}
These coefficients are integral on the lattice described below.
Together, \eqref{eq:coefficients} and \eqref{eq:epsilon} give every
bracket sign in the chosen basis.
The standard EP sign identities include $\eps(\rho,\rho)=-1$
for $\rho\in\Phi$ and skewness when $\rho+\sigma\in\Phi$
\cite[Propositions 5.1--5.2]{epPublished}.

Let $\mc H\simeq\C^N$, write $h_v$ for the element associated
with $v$, and set $\g_n=\mc H\oplus\bigoplus_{\rho\in\Phi}\C x_\rho$.
The bracket is
\begin{equation}\label{eq:bracket}
 \begin{aligned}
 [h_u,h_v]&=0,&[h_u,x_\rho]&=(u,\rho)x_\rho,\\
 [x_\rho,x_{-\rho}]&=-h_\rho,&
 [x_\rho,x_\sigma]&=
 \begin{cases}\eps(\rho,\sigma)x_{\rho+\sigma},&\rho+\sigma\in\Phi,\\
 0,&\rho+\sigma\notin\Phi,\ \rho+\sigma\ne0.
 \end{cases}
 \end{aligned}
\end{equation}
The sum-zero case is always governed by the separate displayed rule.
Put
\begin{equation}\label{eq:hW}
 \h_n=\mc H\oplus\bigoplus_{\rho\in\Phi_O}\C x_\rho
 \simeq\so(2N,\C),\qquad
 W_n=\bigoplus_{\sigma\in\Phi_S}\C x_\sigma.
\end{equation}
Then $W_n$ is an irreducible half-spin $\h_n$-module and
$\g_n=\h_n\oplus W_n$ is $\Z_2$-graded. The orthogonal algebra
acts by derivations of the entire bracket
\cite[Proposition 6.1]{epPublished}. Equivalently, its elements belong to
the Jacobi nucleus: a Jacobiator with one entry in $\h_n$ vanishes.
We will use this nucleus property in the Jacobiator calculations
and in defining the tip product.

\subsection{The lattice and the locked families}
\begin{proposition}\label{prop:lattice}
The lattice generated by $\Phi$ is
\begin{equation}\label{eq:lattice}
 \Lambda_n=D_N^+=D_N\cup(s_0+D_N),\quad
 D_N=\{z\in\Z^N:\textstyle\sum z_i\in2\Z\},\quad
 s_0=\tfrac12\sum_i k_i.
\end{equation}
It is unimodular and is even exactly when $n$ is odd. For $n\geq2$,
its squared-length-two vectors are exactly $\Phi_O$.
\end{proposition}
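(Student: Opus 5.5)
We prove the three claims of Proposition~\ref{prop:lattice} in order: the description of $\Lambda_n$, unimodularity and parity, and the count of norm-two vectors.

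\emph{Identifying the lattice.} Every orthogonal root $\pm k_i\pm k_j$ lies in $D_N$. A spinorial root $\frac12\sum\epsilon_ik_i$ differs from $s_0$ by $-\sum_{\epsilon_i=-1}k_i$. The number of minus signs is $N$ minus an even number, hence even because $N=4(n+1)$ is even. So this difference lies in $D_N$, and $\Phi_S\subset s_0+D_N$. Since $\Phi_O$ generates $D_N$ (standard for $N\ge2$), we get $\Lambda_n\supseteq D_N$. Also $s_0\in\Phi_S$, since all $N$ signs are $+$ and $N$ is even. Hence $\Lambda_n$ contains $D_N\cup(s_0+D_N)$.

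For the reverse inclusion we only need $D_N\cup(s_0+D_N)$ to be closed under addition. This holds because $2s_0=\sum k_i$ has even coordinate sum $N$, so $2s_0\in D_N$. It follows that $\Lambda_n=D_N^+$.

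\emph{Unimodularity and parity.} The index $[\Lambda_n:D_N]$ is $2$ and $\det D_N=4$, so $\det\Lambda_n=4/2^2=1$.

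For integrality, take $z\in D_N$. Then $(s_0,z)=\frac12\sum z_i\in\Z$ and $(s_0,s_0)=N/4=n+1\in\Z$. By bilinearity the form is integral on $\Lambda_n$, and an integral lattice of determinant $1$ is unimodular.

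For parity, every vector of $D_N$ has even norm. For $z\in D_N$,
\[
(s_0+z,s_0+z)=(n+1)+\textstyle\sum z_i+(z,z),
\]
and the last two terms are even. So the norm of $s_0+z$ has the parity of $n+1$. The lattice is therefore even exactly when $n+1$ is even, that is, when $n$ is odd.

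\emph{Norm-two vectors for $n\ge2$.} In $D_N$, a vector $z\in\Z^N$ with $\sum z_i^2=2$ has exactly two entries equal to $\pm1$. These vectors are exactly $\Phi_O$.

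In the coset, write $v=s_0+z$. Every coordinate of $v$ lies in $\frac12+\Z$, so each coordinate has square at least $\frac14$. Hence $(v,v)\ge N/4=n+1\ge3>2$ when $n\ge2$, and the coset contributes no norm-two vectors.

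The argument is routine throughout. The only delicate point is the parity bookkeeping that places $\Phi_S$ in the correct coset, which uses that $N$ is even. The hypothesis $4\mid N$ is what makes $(s_0,s_0)$ an integer and hence the lattice integral.
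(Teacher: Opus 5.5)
Your proof is correct and follows the paper's argument. Both proofs place $\Phi_S$ in $s_0+D_N$ and use $2s_0\in D_N$, get determinant one from the index-two extension of $D_N$, take integrality from $(s_0,s_0)=N/4=n+1$, and exclude the half-integral coset by the bound $N/4>2$; you simply write out steps the paper leaves implicit, namely the reverse inclusion, generation of $D_N$ by $\Phi_O$, and the parity computation for $(s_0+z,s_0+z)$.
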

\begin{proof}
Spinorial roots differ from $s_0$ by integral vectors of even coordinate
sum, while $2s_0\in D_N$. The indicated extension has index two in
$D_N$, whose determinant is four. It is integral because $4\mid N$;
its determinant is consequently one. Since $(s_0,s_0)=N/4=n+1$,
it is even exactly when $n+1$ is even. Every vector in the nonintegral
coset has squared norm at least $N/4$. For $n\geq2$ this exceeds
two, so every squared-length-two vector belongs to $D_N$, where the
claim follows from its integral coordinates.
\end{proof}

The locked $E_7$ family uses the intersection with
$v_{N-1}=v_N$, and the locked $E_6$ family the intersection with
$v_{N-2}=v_{N-1}=v_N$. Thus
\begin{equation}\label{eq:locked}
 \begin{array}{c|c|c|c}
 \text{family}&\text{rank}&\text{orthogonal roots}&\text{spinorial count}\\\hline
 E_7^{(n)}&N-1&D_{N-2}\sqcup\{\pm(k_{N-1}+k_N)\}&2^{N-2}\\
 E_6^{(n)}&N-2&D_{N-3}&2^{N-3}
 \end{array}
\end{equation}
In particular, the orthogonal-plus-Cartan subalgebra of the locked
families is respectively $\so(2N-4,\C)\oplus\slie_2(\C)$ and
$\so(2N-6,\C)\oplus\C$. Statements identifying that subalgebra
with $\so(2N,\C)$ refer only to $\g_n=E_8^{(n)}$.

\section{Jacobiators, simplicity and automorphisms}\label{sec:jacobi}
The Jacobiator is
\begin{equation}\label{eq:jacobi}
 J(x,y,z)=[[x,y],z]+[[y,z],x]+[[z,x],y].
\end{equation}
It is alternating and vanishes whenever one entry belongs to $\h_n$.
The following formulas provide the remaining cases.

\begin{proposition}[Spinorial Jacobiators]\label{prop:jacobi}
For $\alpha,\gamma\in\Phi_S$, $\gamma\ne\pm\alpha$,
\begin{equation}\label{eq:opposite}
 J(x_\alpha,x_{-\alpha},x_\gamma)=
 \left(-(\alpha,\gamma)+
 \mathbf1_{\gamma-\alpha\in\Phi}-
 \mathbf1_{\gamma+\alpha\in\Phi}\right)x_\gamma.
\end{equation}
For three distinct spinorial roots with no opposite pair, the
Jacobiator vanishes unless their total is a spinorial root and
exactly one pairwise sum is a root. In that remaining case,
if the pair is $(\alpha,\beta)$, it is
\begin{equation}\label{eq:translation}
 J(x_\alpha,x_\beta,x_\gamma)=
 \eps(\alpha,\beta)\eps(\alpha+\beta,\gamma)
 x_{\alpha+\beta+\gamma}.
\end{equation}
No Jacobiator has a Cartan component.
\end{proposition}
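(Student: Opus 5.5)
We compute the Jacobiators directly from the bracket \eqref{eq:bracket}, using Lemma~\ref{lem:length} to decide which sums of roots lie in $\Phi$. Since the Jacobiator is alternating and vanishes when one entry lies in $\h_n$, only triples of spinorial root vectors matter. By the $\Z_2$-grading, $J(x_\alpha,x_\beta,x_\gamma)$ has degree $\alpha+\beta+\gamma$. That vector lies in the nonintegral coset $s_0+D_N$, so it is never $0$, and it is nonzero only if it is a spinorial root. This already gives the last claim: no Jacobiator has a Cartan component. We then treat two cases separately: a triple containing an opposite pair, and a triple with no opposite pair.

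\emph{Opposite pair.} Expand the three terms of $J(x_\alpha,x_{-\alpha},x_\gamma)$ in turn.
\begin{itemize}
\item The first term is $[[x_\alpha,x_{-\alpha}],x_\gamma]=[-h_\alpha,x_\gamma]=-(\alpha,\gamma)x_\gamma$.
\item The second term $[[x_{-\alpha},x_\gamma],x_\alpha]$ is nonzero only if $\gamma-\alpha\in\Phi$. Since $\gamma-\alpha$ is an orthogonal root, it produces $\eps(-\alpha,\gamma)\eps(\gamma-\alpha,\alpha)x_\gamma$.
\item The third term $[[x_\gamma,x_\alpha],x_{-\alpha}]$ is nonzero only if $\gamma+\alpha\in\Phi$, and gives $\eps(\gamma,\alpha)\eps(\gamma+\alpha,-\alpha)x_\gamma$.
\end{itemize}
We then reduce each product of signs using bimultiplicativity and $\eps(\rho,\rho)=-1$. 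For the second term, $\eps(-\alpha,\gamma)\eps(\gamma-\alpha,\alpha)=\eps(\alpha,\gamma)^{-1}\eps(\gamma,\alpha)\eps(\alpha,\alpha)^{-1}$. Skewness for the root sum $\gamma-\alpha$ gives $\eps(\gamma,-\alpha)=-\eps(-\alpha,\gamma)$, so $\eps(\gamma,\alpha)=-\eps(\alpha,\gamma)$. Hence the product is $-\eps(\alpha,\alpha)^{-1}=+1$, since all values are $\pm1$. The third term is analogous and gives $-1$. Summing the three terms yields \eqref{eq:opposite}.

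The delicate point is sign bookkeeping. We must check that the skewness identities of \cite[Propositions 5.1--5.2]{epPublished} hold for mixed spinorial/orthogonal pairs, and that bimultiplicativity applies on all of $\Lambda_n$. This is where I expect the main care to be needed.

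\emph{No opposite pair.} The pairwise sums of three distinct spinorial roots lie in $D_N$. Each is nonzero by assumption, so a term $[[x_\alpha,x_\beta],x_\gamma]$ survives only if $\alpha+\beta\in\Phi_O$ and $\alpha+\beta+\gamma\in\Phi_S$. We first show that two pairwise sums cannot both be roots when the total is a root. Suppose $\alpha+\beta$ and $\beta+\gamma$ both lie in $\Phi$. Then $(\alpha,\beta)=(\beta,\gamma)=-n$ by Lemma~\ref{lem:length}. Substituting this into the requirement that $\alpha+\beta+\gamma$ has squared length $n+1$ forces $(\alpha,\gamma)=n$. By the same lemma, $\alpha-\gamma$ is then a root. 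In coordinates, each of $\alpha+\beta$ and $\beta+\gamma$ has exactly two nonzero entries. The triple is therefore $\beta$ plus sign flips, which we would rule out by a direct coordinate check, or else show that the two surviving terms cancel by a sign identity.

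If no pairwise sum is a root, every term vanishes. If exactly one pairwise sum, say $\alpha+\beta$, is a root, then a single term survives. Its value is $\eps(\alpha,\beta)\eps(\alpha+\beta,\gamma)x_{\alpha+\beta+\gamma}$, which is \eqref{eq:translation}. Resolving the case where two or three pairwise sums are roots is the second main obstacle. I would first attempt the coordinate contradiction, reading off the support of $\alpha+\beta$ and $\beta+\gamma$.
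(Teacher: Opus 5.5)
The opposite-pair computation and the argument that no Jacobiator has a Cartan component are correct. For the $\gamma-\alpha$ term you rightly use skewness, not only bimultiplicativity and $\eps(\alpha,\alpha)=-1$. The degree argument really uses the $\Lambda_n$-grading rather than the $\Z_2$-grading, but it is sound.

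The gap is in the no-opposite-pair case, where the planned contradiction rests on an arithmetic slip. If $(\alpha,\beta)=(\beta,\gamma)=-n$, then
\[
 \|\alpha+\beta+\gamma\|^2=3(n+1)-4n+2(\alpha,\gamma)=3-n+2(\alpha,\gamma).
\]
Equating this with $n+1$ gives $(\alpha,\gamma)=n-1$, not $n$. So $\alpha-\gamma\notin\Phi$ and nothing is contradicted.

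Such triples exist at every level. Let $m_\alpha,m_\beta,m_\gamma$ count the coordinates where $\alpha$, $\beta$, $\gamma$ respectively carries the sign differing from the other two. Then $\alpha+\beta\in\Phi$ exactly when $m_\gamma=2$, and the pattern $m_\alpha=m_\gamma=2$, $m_\beta=N-4$ is realizable. For instance, at $N=12$ take the doubled vectors
\[
 a=(1,\dots,1),\quad b=(-1,-1,1,1,-1,\dots,-1),\quad c=(-1,-1,-1,-1,1,\dots,1).
\]
These give $\alpha+\beta=k_3+k_4$ and $\beta+\gamma=-k_1-k_2$, while $a+b+c=(-1,-1,1,\dots,1)$ is a doubled spinorial root. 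So the coordinate contradiction you intend to try first cannot succeed. Two genuinely nonzero terms must instead be shown to cancel.

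That cancellation needs a sign identity you have not supplied. Put $p=\eps(\alpha,\beta)\eps(\alpha+\beta,\gamma)$ and $q=\eps(\beta,\gamma)\eps(\beta+\gamma,\alpha)$. Bimultiplicativity and skewness for the root $\alpha+\beta$ give $q/p=-\eps(\alpha,\gamma)\eps(\gamma,\alpha)$. Since $\alpha+\gamma\notin\Phi$, skewness says nothing about this last product.

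What you need is the spinorial commutator identity
\[
 \eps(\alpha,\gamma)\eps(\gamma,\alpha)=(-1)^{(\alpha,\gamma)-(n-1)}
\]
of \cite[Proposition 5.1]{epPublished}. It equals $+1$ precisely because $(\alpha,\gamma)=n-1$, so $q=-p$ and the two terms cancel.

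You can also check the identity directly from \eqref{eq:epsilon}. The product $\eps(\lambda,\mu)\eps(\mu,\lambda)$ is symmetric and bimultiplicative, and it agrees with $(-1)^{(\lambda,\mu)}$ on every pair of basis vectors except $(\alpha_N,\alpha_N)$, where it differs by $(-1)^{n+1}$. Spinorial roots have odd $\alpha_N$-coefficient $m_N=-a_N$, which gives the stated exponent.

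The three-sum case then follows from your own method. Three pairwise products equal to $-n$ give $\|\alpha+\beta+\gamma\|^2=3-3n\ne n+1$, equivalently $N=6$, which never occurs.
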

\begin{proof}
In the opposite-pair case, the first term is
$[-h_\alpha,x_\gamma]=-(\alpha,\gamma)x_\gamma$.
When present, the second and third terms have coefficients $-1$
and $+1$, respectively, by bimultiplicativity and
$\eps(\alpha,\alpha)=-1$. This gives \eqref{eq:opposite}.

For a triple with no opposite pair, every inner bracket, if nonzero,
is an orthogonal root vector. Its bracket with a spinorial vector
is either zero or a spinorial root vector, so a Cartan term cannot
occur. A term can survive only if the total
$\delta=\alpha+\beta+\gamma$ is spinorial. In doubled coordinates,
this means there is no coordinate where all three signs agree.
Partition the coordinates according to which of the three signs is
the exceptional one; call the sizes $m_\alpha,m_\beta,m_\gamma$.
They are positive: a zero part would make the other two roots
opposite. They are even: pairwise sign differences make the three
multiplicities have the same parity, and their sum $N$ is even. Also $m_\alpha+m_\beta+m_\gamma=N$.
The pair $\alpha+\beta$ is a root exactly when $m_\gamma=2$.
Three surviving terms would require $N=6$, which never occurs here.

If two terms survive, say the pairs $\alpha+\beta$ and
$\beta+\gamma$, then $m_\alpha=m_\gamma=2$ and
$(\alpha,\gamma)=n-1$. Put
$p=\eps(\alpha,\beta)\eps(\alpha+\beta,\gamma)$ and
$q=\eps(\beta,\gamma)\eps(\beta+\gamma,\alpha)$.
Bimultiplicativity gives
$q/p=-\eps(\gamma,\alpha)/\eps(\alpha,\gamma)$, since
$\alpha+\beta$ is a root. The spinorial commutator identity
\cite[Proposition 5.1]{epPublished} is
\begin{equation}\label{eq:epscomm}
 \eps(\alpha,\gamma)\eps(\gamma,\alpha)
 =(-1)^{(\alpha,\gamma)-(n-1)}.
\end{equation}
For this pair it equals $+1$, and the cyclic orientation gives
the cancellation explicitly:
\[
 q/p=\frac{\eps(\beta,\alpha)\eps(\gamma,\alpha)}
 {\eps(\alpha,\beta)\eps(\alpha,\gamma)}=-1.
\]
Thus the two terms cancel. With one term, \eqref{eq:translation}
is the bracket rule itself; with no term the value is zero.
\end{proof}

\begin{corollary}\label{cor:imJ}
$\g_n$ is Lie if and only if $n=1$. For $n\geq2$,
$\Span\{J(x,y,z):x,y,z\in\g_n\}=W_n$.
\end{corollary}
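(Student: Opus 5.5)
The plan is to read everything off Proposition~\ref{prop:jacobi}, using the nucleus property of $\h_n$. Since $J$ is trilinear and alternating, and vanishes when one entry lies in $\h_n$, the span of all Jacobiator values equals the span of $J(x_\alpha,x_\beta,x_\gamma)$ over triples of spinorial roots. By Proposition~\ref{prop:jacobi}, every such value is a multiple of a single spinorial root vector and has no Cartan component. Hence the span is contained in $W_n$ at every level. It then suffices to show two things: all these values vanish at $n=1$, and for $n\ge2$ every $x_\gamma$ with $\gamma\in\Phi_S$ occurs with nonzero coefficient.

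\emph{Level one.} Take $n=1$ and $\gamma\ne\pm\alpha$ in \eqref{eq:opposite}. Lemma~\ref{lem:length} gives $(\alpha,\gamma)\in\{-1,0,1\}$, with $\gamma-\alpha\in\Phi$ iff $(\alpha,\gamma)=1$ and $\gamma+\alpha\in\Phi$ iff $(\alpha,\gamma)=-1$. In each case the coefficient $-(\alpha,\gamma)+\mathbf1_{\gamma-\alpha\in\Phi}-\mathbf1_{\gamma+\alpha\in\Phi}$ is zero.

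For three spinorial roots with no opposite pair, we have $N=8$ and even positive parts $m_\alpha+m_\beta+m_\gamma=8$. A surviving single term would need exactly one $m=2$. The possibilities are $(2,2,4)$, which has two parts equal to $2$, and $(2,4,2)$-type permutations of the same pattern. So either zero or two terms survive, and by the proof of Proposition~\ref{prop:jacobi} two surviving terms cancel. Degenerate triples with a repeated entry, or with $\gamma=\pm\alpha$, vanish by alternation. Hence $J\equiv0$, and $\g_1$ is Lie; this is also the classical $E_8$.

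\emph{Levels $n\ge2$.} The aim is to show every $x_\gamma$ lies in the span, and the opposite-pair formula suffices. Fix $\gamma\in\Phi_S$ and choose $\alpha\in\Phi_S$ differing from $\gamma$ in $d=2$ signs. Then Lemma~\ref{lem:length} gives $(\alpha,\gamma)=n$, so $\gamma-\alpha\in\Phi$. Moreover $\gamma+\alpha\notin\Phi$, since that would require $(\alpha,\gamma)=-n\neq n$. The coefficient in \eqref{eq:opposite} is therefore $1-n\ne0$ for $n\ge2$. Such an $\alpha$ exists because flipping two signs preserves the parity condition, and $\alpha\ne\pm\gamma$ since $2<N$.

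Consequently $x_\gamma\in\Span J$ for all $\gamma\in\Phi_S$, so $\Span J=W_n$. In particular $J\ne0$ and $\g_n$ is not Lie.

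The only step needing care is the level-one case count for the translation-type Jacobiators. The work there is to confirm that the partition constraint forbids exactly one part equal to $2$ when $N=8$, and to make sure the cancellation argument covers the configuration where the two root pairs are arranged in either cyclic order. Alternatively, I would cite that $\g_1$ is the $E_8$ construction with a valid cocycle. The $n\ge2$ direction is immediate once the witness $\alpha$ is chosen.
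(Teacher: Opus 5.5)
Your proof is correct. For $n\geq2$ it is the paper's argument with a different witness. The paper takes $\alpha$ differing from $\gamma$ in $2n$ signs, so $(\alpha,\gamma)=1$. Both indicator terms in \eqref{eq:opposite} are then absent, and $J(x_\alpha,x_{-\alpha},x_\gamma)=-x_\gamma$ comes from the Cartan term alone. Your choice $d=2$ gives $(1-n)x_\gamma$ instead. It uses the $\gamma-\alpha$ indicator term, and it has the nice feature of degenerating exactly at $n=1$. For $n\geq2$ the coefficient $-n\pm1$ is nonzero whatever the sign of that term.

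The genuine difference is at level one. The paper simply invokes the identification $\g_1\cong\mathfrak e_8(\C)$. You instead derive $J\equiv0$ from Proposition~\ref{prop:jacobi} in two steps:
\begin{enumerate}
\item the opposite-pair coefficient vanishes for each $(\alpha,\gamma)\in\{-1,0,1\}$;
\item for $N=8$ a spinorial total forces the partition $(2,2,4)$ up to order, so exactly two terms survive, and these cancel.
\end{enumerate}
Your case analysis is complete. The cyclic-order worry is harmless: since $J$ is alternating, you may relabel so that the shared root of the two surviving pairs is $\beta$, as in the proposition's proof. You also spell out the containment of all Jacobiator values in $W_n$ via trilinearity and the nucleus property, which the paper only asserts.

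The trade-off is this. Your route makes the corollary self-contained relative to the paper's formulas, and it shows the dichotomy in a single coefficient. However, your level-one step depends on the exact signs of the indicator terms in \eqref{eq:opposite} and on \eqref{eq:epscomm}. Those are the same EP sign identities that the $\mathfrak e_8$ citation packages. By contrast, the paper's $n\geq2$ witness needs neither.
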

\begin{proof}
At $n=1$ the construction is $\mathfrak e_8(\C)$ \cite{adams,kollross}.
At $n\geq2$, for any $\gamma\in\Phi_S$ choose $\alpha\in\Phi_S$
differing from it in exactly $2n$ coordinates. Then
$(\alpha,\gamma)=1$, which is neither $n$ nor $-n$.
Formula~\eqref{eq:opposite} gives $J=-x_\gamma$.
All Jacobiators are in $W_n$, so these values span it.
Here and below ``image of the Jacobiator'' means its linear span,
or the image of the linear map $\Lambda^3\g_n\to\g_n$.
\end{proof}

\begin{theorem}\label{th:aut}
$\g_n$ is simple as an anticommutative complex algebra for every
$n\geq1$. For $n\geq2$,
\begin{equation}\label{eq:aut}
 \Der(\g_n)=\operatorname{ad}\h_n\simeq\so(2N,\C),\qquad
 \Aut(\g_n)\simeq\HSpin(2N,\C)=\operatorname{Spin}(2N,\C)/K,
\end{equation}
where $K$ is the order-two central kernel on the chosen half-spin module.
In particular, this automorphism group is connected.
\end{theorem}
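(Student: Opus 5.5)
We prove simplicity first, then determine derivations, then lift to automorphisms.

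\emph{Simplicity.} At $n=1$ the algebra is $\mathfrak e_8(\C)$, so assume $n\geq2$ and let $I\neq0$ be an ideal. Because $I$ is stable under $\operatorname{ad}\mc H$, which acts diagonally with the distinct nonzero weights $\rho\in\Phi$ and weight zero on $\mc H$, it is a sum of weight spaces. There are two cases. If $I$ meets $W_n$, then it contains some $x_\sigma$. Irreducibility of $W_n$ under $\h_n$ gives $W_n\subseteq I$. Brackets $[x_\sigma,x_{-\sigma}]=-h_\sigma$ then put $h_\sigma$ in $I$, and since $\h_n$ is simple, $\h_n\subseteq I$. If instead $I$ meets $\h_n$, the intersection $I\cap\h_n$ is an ideal of the simple algebra $\h_n$, so it is all of $\h_n$. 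Acting with $\h_n$ on the nonzero module $W_n$ then gives $W_n=[\h_n,W_n]\subseteq I$.

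\emph{Derivations.} The inclusion $\operatorname{ad}\h_n\subseteq\Der$ is the nucleus property \cite[Proposition 6.1]{epPublished}. Conversely, let $D$ be a derivation. Then $D$ preserves the Jacobiator map, so $D\bigl(\Span J\bigr)\subseteq\Span J$, and Corollary~\ref{cor:imJ} gives $D(W_n)\subseteq W_n$. Next, $[W_n,W_n]$ is spanned by the $h_\sigma$ and the orthogonal root vectors, and is all of $\h_n$. So $D$ also preserves $\h_n$, because $D[w,w']=[Dw,w']+[w,Dw']$. Hence $D|_{\h_n}$ is a derivation of $\so(2N,\C)$, and therefore equals $\operatorname{ad}h$ for some $h\in\h_n$. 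Put $D'=D-\operatorname{ad}h$. This $D'$ vanishes on $\h_n$ and, restricted to $W_n$, commutes with the $\h_n$-action. By Schur's lemma $D'|_{W_n}=c\,\mathrm{id}$. Applying $D'$ to $[x_\sigma,x_{-\sigma}]=-h_\sigma$ gives $0=2c\,h_\sigma$, so $c=0$ and $D=\operatorname{ad}h$.

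\emph{Automorphisms.} By the same argument, every automorphism $\phi$ preserves $W_n$ and $\h_n$. Its restriction to $\h_n$ is an automorphism of $\so(2N,\C)$, possibly outer. I expect the main obstacle to be the $\Z_2$ outer (diagram) automorphisms. Any automorphism that swaps the two half-spin modules must be excluded, because $\phi|_{W_n}$ intertwines $W_n$ with its twist by $\phi|_{\h_n}$, and $W_n$ is the chosen half-spin module rather than the other one. Since $N\geq12$, there is no triality. The diagram automorphism exchanging the two half-spin representations therefore maps $W_n$ to the non-isomorphic module, so $\phi|_{\h_n}$ is inner and equals $\operatorname{Ad}(g)$ for some $g\in\operatorname{Spin}(2N,\C)$. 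Replacing $\phi$ by $\phi\circ g^{-1}$, where $g$ acts on $W_n$ through the half-spin representation, we may assume $\phi|_{\h_n}=\mathrm{id}$. Schur's lemma then gives $\phi|_{W_n}=c$. Compatibility with $[x_\sigma,x_{-\sigma}]=-h_\sigma$ forces $c^2=1$. Compatibility of $[W_n,W_n]\to\h_n$ with $[\h_n,W_n]\to W_n$ needs more care: the bracket of two spinors can land in $W_n$ only via $\h_n$, so there is no cubic constraint.

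The sign $c=-1$ is realized by the central element of $\operatorname{Spin}(2N,\C)$ acting as $-1$ on $W_n$ and trivially on $\h_n$. The automorphism group is therefore the image of $\operatorname{Spin}(2N,\C)$ acting on $\h_n\oplus W_n$. The kernel of this action is the central element that is trivial on both the adjoint and the chosen half-spin module. This is the order-two group $K$, which gives $\HSpin(2N,\C)$. Connectedness follows because $\HSpin(2N,\C)$ is a quotient of the connected group $\operatorname{Spin}(2N,\C)$.
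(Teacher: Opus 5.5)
Your proposal is correct and follows the paper's proof step by step: $W_n$ is characteristic as the Jacobiator image and $\h_n=[W_n,W_n]$; derivations are handled by subtracting an inner derivation, applying Schur's lemma, and using $2c\,h_\sigma=0$; automorphisms by excluding the half-spin-swapping diagram automorphism and realizing $c=-1$ by the central element. Your weight-space splitting $I=(I\cap\h_n)\oplus(I\cap W_n)$ only replaces the paper's complete-reducibility step for simplicity, and the one claim you leave unjustified, $[W_n,W_n]=\h_n$, follows either from the paper's choice of $\sigma\in\Phi_S$ with $(\rho,\sigma)=1$ (so that $\rho-\sigma\in\Phi_S$) or from the fact that $[W_n,W_n]$ is a nonzero ideal of the simple algebra $\h_n$.
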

\begin{proof}
The bracket satisfies $[W_n,W_n]=\h_n$. Indeed, opposite weights
give $h_\sigma$ and the spinorial weights span $\C^N$.
For any $\rho\in\Phi_O$, choose $\sigma\in\Phi_S$ with
$(\rho,\sigma)=1$; then $\rho-\sigma\in\Phi_S$ and its
bracket with $x_\sigma$ is a nonzero multiple of $x_\rho$.
An ideal is $\h_n$-stable. The two summands $\h_n$ and $W_n$
are nonisomorphic irreducible $\h_n$-modules, so complete
reducibility makes an ideal a sum of these summands. Neither
summand alone is an ideal, since $[\h_n,W_n]=W_n$ and
$[W_n,W_n]=\h_n$. Simplicity follows.

For $n\geq2$, $W_n$ is characteristic by Corollary~\ref{cor:imJ},
and $\h_n=[W_n,W_n]$ is characteristic too. Restrict a derivation
to $\h_n$, subtract the corresponding inner derivation, and apply
Schur's lemma on $W_n$. The remainder is a scalar $\lambda$;
applying it to $[W_n,W_n]$ gives $2\lambda[W_n,W_n]=0$,
hence $\lambda=0$.

To determine the full group, let an automorphism of $\g_n$ restrict
to $\h_n$. Since $N\geq12$, the only nontrivial diagram automorphism
of $D_N$ exchanges the two half-spin modules. It cannot preserve
the chosen $W_n$, so the restriction is inner. After a lift to $\operatorname{Spin}(2N,\C)$ is removed,
the remaining automorphism is the identity on $\h_n$ and a scalar
$\lambda$ on $W_n$, with $\lambda^2=1$. Both signs occur in the
spin action: the central $-1$ acts trivially on $\h_n$ and as
$-1$ on $W_n$. This proves the asserted full group and its
connectedness. Its dimension is $N(2N-1)$.
\end{proof}

\begin{remark}[Equivariance and the Fierz obstruction]\label{rem:fierz}
The adjoint representation occurs with multiplicity one in
$\Lambda^2W_n$: the Clifford decomposition of $W_n\otimes W_n$
contains the exterior two-form representation exactly once,
and its infinitesimal spin action is skew for the invariant
symmetric form on $W_n$ \cite{adams,abs}. Thus a nonzero equivariant map
$\pi:\Lambda^2W_n\to\h_n$ is unique up to scale. On
$\h_n\oplus W_n$, use the usual Lie bracket and module action
and set $[s,t]=c\pi(s,t)$. All nonzero values of $c$ give
isomorphic anticommutative algebras over $\C$ after rescaling
$W_n$. For $c=0$ one instead obtains the Lie semidirect product
$\h_n\ltimes W_n$. For general $c$, the spinorial Jacobiator is
\begin{equation}\label{eq:fierz}
 c\bigl(\pi(s,t)\cdot u+\pi(t,u)\cdot s+\pi(u,s)\cdot t\bigr).
\end{equation}
The scalar normalization is carried by $c$ and the chosen map
$\pi$. Within this EP family, the nonzero extension is Lie exactly
at $n=1$.
\end{remark}

\section{Tetracode gradings and their fibres}\label{sec:grading}
\subsection{Characters and the signed-pattern lemma}
Any character $\chi:\Lambda_n\to\F_3^2$ induces a grading
\begin{equation}\label{eq:grading}
 (\g_n)_0=\mc H\oplus\bigoplus_{\chi(\rho)=0}\C x_\rho,
 \qquad (\g_n)_c=\bigoplus_{\chi(\rho)=c}\C x_\rho\quad(c\ne0).
\end{equation}
The bracket rule directly gives
$[(\g_n)_c,(\g_n)_d]\subseteq(\g_n)_{c+d}$.
Since the indices of $D_N$ in both $\Lambda_n$ and $\Z^N$
are two, and two is invertible in $\F_3$, restriction and unique
extension give
\begin{equation}\label{eq:characters}
 \Hom(\Lambda_n,\F_3^2)\simeq(\F_3^2)^N,
 \quad \chi(\pm k_i\pm k_j)=\pm c_i\pm c_j,
 \quad \chi(\tfrac12\sum_i\epsilon_i k_i)=2\sum_i\epsilon_i c_i.
\end{equation}
Here $c_i$ is the value on $k_i$ of the extension to $\Z^N$;
the $k_i$ need not themselves belong to $\Lambda_n$.
We call such a grading \emph{balanced of Okubo type} if its
eight nonzero components are nonempty and have equal dimensions.
This terminology records the dimension pattern. It leaves the
multiplication unchanged and concerns character-induced gradings.
For the established Jordan gradings of exceptional Lie algebras,
see \cite{elduqueJordan}.

A \emph{tetracode character} has four nonzero coordinate values,
one representative of each line in $\mathbf P^1(\F_3)$, and all
other values zero. Let $C$ be its four-coordinate code block.
\begin{lemma}\label{lem:signed}
For any such representatives $u_1,u_2,u_3,u_4$, no signed sum
$\sum_j\epsilon_j u_j$ is zero. Each nonzero value has exactly
two preimages among the sixteen sign patterns; these two patterns
have opposite parity.
\end{lemma}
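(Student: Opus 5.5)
The plan is to reduce everything to a normal form for the representatives and then count directly. The lines of $\mathbf P^1(\F_3)$ are spanned by $(1,0)$, $(0,1)$, $(1,1)$ and $(1,-1)$. Any choice of representatives is therefore $u_j=\eta_j v_j$ with $\eta_j=\pm1$, where $v_1,\ldots,v_4$ are these four standard vectors. Replacing $\epsilon_j$ by $\epsilon_j\eta_j$ is a bijection of the sixteen sign patterns, so the multiset of signed sums is the same as for the standard $v_j$.

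That bijection changes the number of $+$ signs by a fixed parity, namely the number of $j$ with $\eta_j=-1$, taken mod $2$. So "the two preimages have opposite parity" is also invariant under the substitution. It therefore suffices to treat $u_j=v_j$.

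For the standard vectors the signed sum is
\[
\bigl(\epsilon_1+\epsilon_3+\epsilon_4,\ \epsilon_2+\epsilon_3-\epsilon_4\bigr)\in\F_3^2 .
\]
Reducing a sum of three signs modulo $3$ is easy to tabulate. The values $3,1,-1,-3$ become $0,1,2,0$.

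I would then organize the count by the pair $(\epsilon_3,\epsilon_4)$.

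\textbf{First coordinate.} It is zero exactly when $\epsilon_1=\epsilon_3=\epsilon_4$. Otherwise it equals $\epsilon_1$ modulo $3$.

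\textbf{Second coordinate.} It is zero exactly when $\epsilon_2=\epsilon_3=-\epsilon_4$. Otherwise it equals $\epsilon_2$.

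\textbf{No zero sums.} Both coordinates vanish only if $\epsilon_3=\epsilon_4$ and $\epsilon_3=-\epsilon_4$, which is impossible. So no signed sum is zero.

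\textbf{Exactly two preimages.} There are sixteen patterns and eight nonzero values, so it suffices to show each value is hit at least twice, or alternatively to check the fibres directly. Fix the target $(a,b)\neq0$.
\begin{itemize}
\item If $\epsilon_3=\epsilon_4$, the second coordinate is never zero and equals $\epsilon_2$. The first coordinate is $0$ when $\epsilon_1=\epsilon_3$ and $\epsilon_1$ otherwise.
\item If $\epsilon_3=-\epsilon_4$, the roles of the two coordinates swap.
\end{itemize}
Thus for a target with $a=0$, $b\neq0$, the preimages are $\epsilon_3=\epsilon_4=\epsilon_1$ with $\epsilon_2=b$, for either common sign. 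That gives two patterns, and they differ by flipping three signs, so their parities are opposite. The case $b=0$, $a\neq0$ is symmetric.

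For $a,b$ both nonzero there are two sources of preimages:
\begin{itemize}
\item From $\epsilon_3=\epsilon_4$: we need $\epsilon_1=a\neq\epsilon_3$ and $\epsilon_2=b$, so $\epsilon_3=\epsilon_4=-a$. This gives one pattern.
\item From $\epsilon_3=-\epsilon_4$: we need $\epsilon_2=b$ with $(\epsilon_3,-\epsilon_4)$ not both equal to $b$, and $\epsilon_1=a$. This gives $\epsilon_3=-b$, $\epsilon_4=b$, so again one pattern.
\end{itemize}
The two patterns share $\epsilon_1=a$ and $\epsilon_2=b$. They differ in the pair $(\epsilon_3,\epsilon_4)$: one is $(-a,-a)$, the other $(-b,b)$. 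Exactly one of $\epsilon_3,\epsilon_4$ differs between them, so the parities are opposite.

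The main obstacle is only bookkeeping. One must make sure the reduction to standard representatives really preserves the parity statement, and check the mixed case $a,b\neq0$ carefully. As a sanity check I would verify that the fibre sizes total $8\times2=16$.
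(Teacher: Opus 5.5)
Your normalization is sound. Relabelling the $u_j$ by their lines does not change the number of $+$ signs, and the substitution $\epsilon_j\mapsto\epsilon_j\eta_j$ shifts every parity by the same amount. The no-zero argument and the targets with $a=0$ or $b=0$ are also correct.

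The error is in the two ``otherwise'' rules. Three signs that are not all equal sum to their \emph{majority} sign. If $\epsilon_3=\epsilon_4\neq\epsilon_1$, the majority is $\epsilon_3=-\epsilon_1$, so the first coordinate is $\epsilon_1+2\epsilon_3\equiv-\epsilon_1$, not $\epsilon_1$. The first coordinate equals $\epsilon_1$ only when $\epsilon_3=-\epsilon_4$. Likewise, if $\epsilon_3=-\epsilon_4$ and $\epsilon_2\neq\epsilon_3$, the second coordinate is $\epsilon_3=-\epsilon_2$. For example, $(+,+,-,-)$ has value $(-1,1)$, yet it is one of your two proposed preimages of $(1,1)$.

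In general your patterns $(a,b,-a,-a)$ and $(a,b,-b,b)$ have values $(-a,b)$ and $(a,-b)$. So your parity comparison in the mixed case is between patterns in different fibres. There was a built-in warning sign. Two patterns differing only in position $j$ have signed sums differing by $2\epsilon_ju_j\neq0$, so they can never share a fibre. Your pair differs in exactly one position.

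The repair is short. The preimages of $(a,b)$ with $ab\neq0$ are $(-a,b,a,a)$ and $(a,-b,b,-b)$. They differ in positions $1$ and $2$ and in exactly one of positions $3,4$, hence in three positions, so their parities are opposite. Your count of one pattern from each case survives, because the corrected value maps are still bijections onto the four targets with both coordinates nonzero. Once fixed, your argument is a valid direct verification.

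The paper avoids this casework. Your standard signed-sum map is exactly the matrix $H$. The difference of two patterns in one fibre is therefore a nonzero word of $\mathcal T$, hence of weight three. This gives opposite parity and at most two patterns per fibre at once, and $16=8\cdot2$ then forces exactly two. The three-position differences in your corrected fibres are precisely these codewords.
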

\begin{proof}
Changing signs, permuting columns and applying $GL_2(\F_3)$ reduces
the map to \eqref{eq:tetracode}. Its kernel is the tetracode;
a sign pattern has weight four, so cannot lie in that kernel.
Two distinct sign patterns in a fibre differ in exactly three
positions, because their difference is a nonzero codeword.
They therefore have opposite sign parity. A fibre contains at
most two patterns: among three patterns two have the same parity,
contradicting the preceding conclusion. Sixteen patterns are
distributed over eight nonzero values, so every value has two.
\end{proof}

\subsection{Ambient and locked fibre counts}
\begin{theorem}\label{th:ambientGrading}
A tetracode character gives
\begin{equation}\label{eq:ambientFibres}
 (\g_n)_0\simeq\so(2N-8,\C)\oplus\C^4,
 \qquad \dim(\g_n)_c=8n+3+2^{4n}=t_n\quad(c\ne0).
\end{equation}
\end{theorem}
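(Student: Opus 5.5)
Set $C=\{1,2,3,4\}$ (after permuting coordinates) for the code block, with nonzero values $u_1,\dots,u_4$, and let $R$ be the remaining $N-4=4n$ coordinates, where $c_i=0$. The plan is to count roots fibre by fibre using \eqref{eq:characters}, handling $\Phi_O$ and $\Phi_S$ separately.

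\emph{Degree zero.} For $\pm k_i\pm k_j$, the value is $\pm c_i\pm c_j$. If $i,j\in R$ it vanishes, which gives the $D_{4n}$ roots, i.e.\ $\so(2N-8,\C)=\so(8n,\C)$. If exactly one index lies in $C$, the value is $\pm u_i\neq0$. If both lie in $C$, then $u_i,u_j$ represent distinct lines of $\mathbf P^1(\F_3)$, so they are linearly independent and $\pm u_i\pm u_j\neq0$. For a spinorial root, the value is $2\sum_{i\in C}\epsilon_iu_i$, which is nonzero by Lemma~\ref{lem:signed}. So the degree-zero component is the Cartan $\C^N$ plus the $D_{4n}$ roots. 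This is the reductive algebra $\so(8n,\C)\oplus\C^4$, where the extra $\C^4$ is the Cartan part along $C$, which commutes with everything in degree zero.

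\emph{Nonzero degrees.} For each $c\neq0$ the contributions are as follows.
\begin{itemize}
\item[(i)] Mixed orthogonal roots $\pm k_i\pm k_j$ with $i\in C$, $j\in R$. Their values are $\pm u_i$, and $\{\pm u_i\}$ is exactly the set of eight nonzero vectors, each hit once by a choice of $i$ and sign. There are then $2\cdot4n$ choices of $\pm k_j$, so each $c$ receives $8n$ roots.
\item[(ii)] Roots $\pm k_i\pm k_j$ with $i<j$ in $C$. There are $6\cdot4=24$ of them, with values $\pm u_i\pm u_j$. Any two of the lines together with their sums and differences should give every nonzero vector exactly $24/8=3$ times. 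To verify this, normalize to the tetracode representatives $(1,0),(0,1),(1,1),(1,-1)$ and check directly, or use the transitivity of $GL_2(\F_3)$ on the configuration together with the action of the scalar $-1$.
\item[(iii)] Spinorial roots. Their number is $2^{N-1}$, and the parity constraint couples $C$ and $R$. For a fixed sign pattern on $C$ there are $2^{4n-1}$ compatible patterns on $R$. By Lemma~\ref{lem:signed}, the value $2\sum\epsilon_iu_i$ hits each nonzero $c$ from exactly two $C$-patterns, since doubling is a bijection on $\F_3^2$. Each $c$ therefore receives $2\cdot2^{4n-1}=2^{4n}$ spinorial roots. Here the opposite parity of the two patterns is irrelevant, because $R$ absorbs either parity.
\end{itemize}
Summing gives $8n+3+2^{4n}=t_n$.

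The only step needing real care is (ii), the uniform three-fold coverage by $\pm u_i\pm u_j$. I expect to do it by a direct eight-entry table after the $GL_2(\F_3)$ normalization used in Lemma~\ref{lem:signed}. The reduction is legitimate because sign changes of representatives only permute the root set. As a consistency check, $8t_n+\dim(\g_n)_0$ should equal $\dim\g_n$.
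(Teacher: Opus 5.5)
Your proposal is correct and follows the paper's proof essentially step for step. It uses the same split of $\Phi_O$ by how many indices lie in the code block, the same use of Lemma~\ref{lem:signed} with the parity count $2\cdot 2^{4n-1}$ for $\Phi_S$, and the same identification of degree zero as the Cartan plus the $D_{4n}$ roots; your alternative justification of the threefold in-block coverage, by $GL_2(\F_3)$-invariance of the multiset $\{\pm u_i\pm u_j\}$ and transitivity on the eight nonzero vectors, is also valid.
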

\begin{proof}
Orthogonal roots supported in the zero block form $D_{N-4}$.
No root with both indices in $C$ has degree zero, since distinct
representatives are not proportional. For any nonzero degree,
roots with one index in $C$ and one outside it contribute
$2(N-4)=8n$ vectors. The 24 roots with both indices in $C$
contribute three to each degree, as a direct calculation with
the four columns of $H$ shows.

For spinorial roots the degree depends on four signs. Each nonzero
degree receives two patterns, each allowing $2^{N-5}$ choices
on the remaining coordinates after parity is imposed. Its
spinorial contribution is $2^{N-4}=2^{4n}$. No spinorial root
has degree zero. Adding the orthogonal contribution proves the
count. The Cartan has dimension $N$, four greater than the rank
of $D_{N-4}$, which gives the zero component.
\end{proof}
At $n=1$, the degree-zero roots form $D_4$. Both $D_4$ and the
idempotent system $A_2^4$ have 24 roots, but their ranks are four
and eight. This gives a simple way to distinguish the two uses
of the tetracode.

For the locked families, place $C$ among the coordinates not
locked in \eqref{eq:locked}. The same argument gives common
nonzero dimensions
\begin{equation}\label{eq:lockedFibres}
 t_n^{(7)}=8n-1+2^{4n-1},\qquad
 t_n^{(6)}=8n-3+2^{4n-2}.
\end{equation}
For $E_7^{(n)}$, the free orthogonal contribution is
$2(N-6)+3=8n-1$, and the locked $A_1$ has degree zero.
The spinorial count per nonzero degree is $2^{N-5}$.
For $E_6^{(n)}$, these numbers become $2(N-7)+3=8n-3$
and $2^{N-6}$. At $n=1$ the fibre dimensions are 15 and 9. The products and
nuclei of these locked families enter separately in their geometry
and automorphism groups.

\section{The tip product and an explicit Jordan obstruction}\label{sec:tip}
\subsection{The root census and frame}
Fix the external $A_2$ generated by $k_1-k_2$ and $k_2-k_3$.
Use the tip defined, in doubled coordinates, by
$a_1=a_2$ and $a_2-a_3=2$. It consists of
\begin{equation}\label{eq:tiproots}
 \{k_1+k_2\}\ \sqcup\ \{-k_3\pm k_j:4\leq j\leq N\}
 \ \sqcup\ \{\sigma\in\Phi_S:2\sigma_1=2\sigma_2=1,
 \ 2\sigma_3=-1\}.
\end{equation}
Its complex span $T_n$ has dimension
\begin{equation}\label{eq:tipdim}
 t_n=1+2(N-3)+2^{N-4}=8n+3+2^{4n}.
\end{equation}
There is no root sum between two tip roots, so $[T_n,T_n]=0$.
Choose the orthogonal frame
\begin{equation}\label{eq:frame}
 f_1=k_1+k_2,\qquad f_2=-k_3+k_4,\qquad f_3=-k_3-k_4.
\end{equation}
Put $E_i=x_{f_i}$ and use the following signs and normalization:
\begin{equation}\label{eq:tipproduct}
 I=E_1+E_2+E_3,\qquad
 I^-=-\sum_{i=1}^3x_{-f_i},\qquad
 x\circ y=\tfrac12[[x,I^-],y].
\end{equation}
We use the unital convention of \cite[Definition 8.4 and
equation (8.5)]{epPublished}; see also \cite[Definition 4.4 and
equation (4.5)]{ep2}. The frame \eqref{eq:frame} belongs to the
tip \eqref{eq:tiproots}.

\begin{proposition}\label{prop:unit}
Formula~\eqref{eq:tipproduct} defines a commutative unital
product on $T_n$. The $E_i$ are orthogonal idempotents and
\begin{equation}\label{eq:Peirce}
 E_i\circ x_\rho=\tfrac12(\rho,f_i)x_\rho.
\end{equation}
\end{proposition}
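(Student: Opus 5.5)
The plan is to derive everything from two ingredients available earlier: the Jacobi-nucleus property of $\h_n$ (orthogonal elements are derivations, so any Jacobiator with an entry in $\h_n$ vanishes), and the vanishing $[T_n,T_n]=0$ recorded after \eqref{eq:tipdim}. Note that $I^-$ is a combination of the orthogonal root vectors $x_{-f_i}$, so $I^-\in\h_n$.

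\emph{Closure.} The tip is a homogeneous component for the grading by the external $A_2$ (the pairings with $k_1-k_2$ and $k_2-k_3$). The vectors $x_{-f_i}$ have the opposite degree to the tip. Hence $[[x,I^-],y]$ has the tip degree, and $x\circ y\in T_n$ for $x,y\in T_n$. Equivalently, every root occurring in $\rho-f_i+\sigma$ satisfies the two defining equations $a_1=a_2$ and $a_2-a_3=2$, since these are linear and $\rho,\sigma,f_i$ all satisfy them, with $-f_i$ contributing the opposite shift.

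\emph{Commutativity.} For $x,y\in T_n$ I would expand
\[
 J(x,I^-,y)=[[x,I^-],y]+[[I^-,y],x]+[[y,x],I^-].
\]
The last term vanishes because $[T_n,T_n]=0$. By anticommutativity $[[I^-,y],x]=-[[y,I^-],x]$. Since $I^-\in\h_n$, the nucleus property gives $J(x,I^-,y)=0$. Therefore $[[x,I^-],y]=[[y,I^-],x]$, which is exactly $x\circ y=y\circ x$.

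\emph{Peirce formula.} First I compute $[E_i,I^-]=-\sum_j[x_{f_i},x_{-f_j}]$. For $j=i$ the sum-zero rule gives $[x_{f_i},x_{-f_i}]=-h_{f_i}$. For $j\neq i$ I check directly that $f_i-f_j$ is not in $\Phi$ and is nonzero:
\begin{itemize}
\item $f_1-f_2=k_1+k_2+k_3-k_4$ has four nonzero integral coordinates, so it is neither orthogonal nor spinorial;
\item $f_1-f_3=k_1+k_2+k_3+k_4$ likewise;
\item $f_2-f_3=2k_4$ is not a root.
\end{itemize}
These brackets therefore vanish, and $[E_i,I^-]=h_{f_i}$. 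Then \eqref{eq:bracket} gives
\[
 E_i\circ x_\rho=\tfrac12[h_{f_i},x_\rho]=\tfrac12(f_i,\rho)x_\rho,
\]
which is \eqref{eq:Peirce}. Taking $\rho=f_j$ and using $(f_i,f_j)=2\delta_{ij}$ shows that the $E_i$ are orthogonal idempotents. Commutativity also gives the same formula for $x_\rho\circ E_i$.

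\emph{Unit.} Summing \eqref{eq:Peirce} over $i$ gives $I\circ x_\rho=\tfrac12(\rho,f_1+f_2+f_3)x_\rho$, with $f_1+f_2+f_3=k_1+k_2-2k_3$. In doubled coordinates,
\[
 (\rho,k_1+k_2-2k_3)=\tfrac12(a_1+a_2-2a_3)=a_2-a_3=2,
\]
using the tip equations. Hence $I\circ x_\rho=x_\rho$ for every tip root, and by commutativity $I$ is a two-sided unit.

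The step that needs the most care is the commutativity argument. It relies on the nucleus property of $\h_n$ from \cite[Proposition 6.1]{epPublished} and on the absence of root sums inside the tip. The remaining checks only require care with the sign conventions in $I^-$ and in the rule $[x_\rho,x_{-\rho}]=-h_\rho$.
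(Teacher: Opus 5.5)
Your proposal is correct and follows the paper's own argument: closure from the external $A_2$ weight, commutativity from the Jacobi nucleus property of $I^-\in\h_n$ combined with $[T_n,T_n]=0$, and the Peirce formula, idempotency and unit from $[E_i,I^-]=h_{f_i}$ together with $\sum_i(\rho,f_i)=2$ on the tip. Your explicit check that $f_i-f_j\notin\Phi$ uses integrality rather than length alone, which matters because at $n=3$ spinorial roots also have squared length four; it spells out what the paper compresses into ``orthogonality of the $f_i$''.
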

\begin{proof}
The external $A_2$ weight shows that a double bracket in
\eqref{eq:tipproduct}, when nonzero, belongs to the same tip.
Since $I^-\in\h_n$, the Jacobi nucleus identity and
$[T_n,T_n]=0$ imply commutativity. Orthogonality of the $f_i$
gives $[E_i,I^-]=h_{f_i}$; this yields
\eqref{eq:Peirce}, $E_i\circ E_j=\delta_{ij}E_i$ and
$I\circ x_\rho=x_\rho$, because every tip root has
$\sum_i(\rho,f_i)=2$.
\end{proof}
This construction is over $\C$. Every off-diagonal root vector
has square zero, so the real span of the root basis is not a
Euclidean Jordan algebra. At $n=1$ the product is the complex
Albert product. Its positive real form will be obtained from a
different real structure in Section~\ref{sec:cone}.

\subsection{Blocks and the compatible grading}
The simultaneous eigenspaces in \eqref{eq:Peirce} are
\begin{equation}\label{eq:blocks}
 T_n=\C E_1\oplus\C E_2\oplus\C E_3\oplus T_{23}
 \oplus T_{12}\oplus T_{13},\quad
 \dim T_{23}=8n,\quad \dim T_{12}=\dim T_{13}=2^{4n-1}.
\end{equation}
The tip trace is the linear functional that is one on each $E_i$
and vanishes on the three off-diagonal blocks. We denote it by
$\tr$, as for the matrix trace used earlier.
The roots $-k_3\pm k_j$ for $j\geq5$ have scalar products
$(0,1,1)$ with the frame. For a spinorial root let
$\epsilon_4=2\rho_4\in\{\pm1\}$; then
\begin{equation}\label{eq:spinFrame}
 (\rho,f_1)=1,\quad (\rho,f_2)=\tfrac{1+\epsilon_4}{2},
 \quad (\rho,f_3)=\tfrac{1-\epsilon_4}{2}.
\end{equation}
The remaining $N-4=4n$ signs identify the vector and two
opposite half-spin representations of $\so(8n,\C)$ supported
on coordinates $5,\ldots,N$. That algebra fixes $I^-$, hence
acts by derivations of the tip product. We use \emph{Peirce blocks} for these simultaneous eigenspaces;
the definition here uses only \eqref{eq:Peirce}.

If the tetracode block is contained in $\{4,\ldots,N\}$, direct
counting gives
\begin{equation}\label{eq:tipFibres}
 \dim(T_n)_0=8n-5,\qquad
 \dim(T_n)_c=1+2^{4n-3}\quad(c\ne0).
\end{equation}
The zero orthogonal count is $1+2(N-7)$; each nonzero degree
receives one orthogonal root. The two signed code patterns
for that degree have opposite parity, giving $2^{N-7}$
spinorial roots. On the $N-7$ coordinates outside
$\{1,2,3\}\cup C$, each nonzero fibre is, as a module,
\begin{equation}\label{eq:tipModule}
 (T_n)_c\simeq\mathbf1\oplus S^+\oplus S^-
 \quad\text{for }\so(2N-14,\C).
\end{equation}
For $N=8$, the last algebra is the one-dimensional abelian
$\so(2,\C)$, and the two spinor weights are interpreted accordingly.

For a grading of the \emph{product}, impose the stronger condition
\begin{equation}\label{eq:compatible}
 C\subseteq\{5,\ldots,N\}.
\end{equation}
Then all frame roots and $I^-$ have degree zero, and
$ (T_n)_c\circ(T_n)_d\subseteq(T_n)_{c+d}$.
The Lie algebra in \eqref{eq:tipModule} preserves the tip and
character. If it moves coordinate four, however, it moves $I^-$.
Its subalgebra fixing $I^-$ acts by derivations of the product.

At $n=1$, the dimensions in \eqref{eq:tipFibres} are $3+8\cdot3$.
They agree with a coarsening of the known fine $\Z_3^3$ grading
of the complex Albert algebra \cite{draperMartin,elduqueKochetov}.
Whether these gradings are conjugate requires a comparison of
their homogeneous products. At general levels, a uniform 27-component grading
would require
\begin{equation}\label{eq:div27}
 27\mid t_n\quad\Longleftrightarrow\quad n\equiv1\pmod{27}.
\end{equation}
To see this, $16^n$ has period nine modulo 27, so $t_n$ has
period 27; evaluation of its 27 residues gives only residue one.
Thus a uniform refinement is impossible for $2\leq n\leq27$.
Existence at $n\equiv1\pmod{27}$ and nonuniform refinements
remain separate questions.

\subsection{An identity-level non-Jordan witness}
\begin{theorem}\label{th:tipNonJordan}
The complex algebra $(T_n,\circ)$ is Jordan exactly at $n=1$.
\end{theorem}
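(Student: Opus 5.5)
The plan is to treat the two directions separately. At $n=1$, Proposition~\ref{prop:unit} and the remark after it identify $(T_1,\circ)$ with the complex Albert product: $\g_1\simeq\mathfrak e_8(\C)$ by Corollary~\ref{cor:imJ}, and the tip is a $\mathbf{27}$ in \eqref{eq:branching} with unit $I$. So that case reduces to the classical fact that the complex Albert algebra is Jordan \cite{loos,truini,marraniTruini}; I would only check that our normalization $\tfrac12[[x,I^-],y]$ matches the cited convention of \cite[Definition 8.4]{epPublished}.

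For $n\geq2$, the idea is that a Jordan algebra with a connected frame of three orthogonal idempotents must have Peirce spaces of equal dimension. Here \eqref{eq:blocks} gives $\dim T_{23}=8n$ and $\dim T_{12}=\dim T_{13}=2^{4n-1}$, which differ once $n\geq2$. I would turn this into an explicit witness in four steps.

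\emph{Step 1.} Verify the Peirce multiplication rule $T_{12}\circ T_{23}\subseteq T_{13}$ directly from \eqref{eq:tipproduct}. For a spinorial $\rho$ with $\epsilon_4=+1$ and $\tau=-k_3+k_j$ with $j\geq5$, the only contributing term comes from $x_{-f_2}$. It produces $\rho+k_j-k_4$, which is a root only when $2\rho_j=-1$, and that root lies in $T_{13}$ by \eqref{eq:spinFrame}.

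\emph{Step 2.} Build a connecting element $c=x_\rho+x_{\rho'}\in T_{12}$ with $\rho'=f_1+f_2-\rho$, which is again a spinorial tip root in $T_{12}$. Using Lemma~\ref{lem:length} and the sign rules for $\eps$, compute $c\circ c=\lambda(E_1+E_2)$ with $\lambda\neq0$; the only nonzero brackets should be those through $x_{-f_1}$ and $x_{-f_2}$.

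\emph{Step 3.} Recall that in any Jordan algebra, the linearized Jordan identity with this frame forces $4\,c\circ(c\circ y)=\lambda y$ for every $y\in T_{13}$. In particular, $y\mapsto c\circ y$ must be injective from $T_{13}$ to $T_{23}$.

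\emph{Step 4.} Since $2^{4n-1}>8n$ for $n\geq2$, some basis vector $y=x_\sigma\in T_{13}$ satisfies $c\circ y=0$. Concretely, the sign conditions in Step 1 fail for both summands of $c$ when $\sigma$ is chosen with suitable signs on coordinates $j\geq5$. For this $y$ the relation of Step 3 reads $0=\lambda y$, which is false.

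To make the witness explicit rather than only dimensional, I would write the specific Jordanator whose vanishing yields Step 3. A natural candidate is $(c^2\circ y)\circ c-c^2\circ(y\circ c)$, possibly combined with the linearized term involving $E_1$. For the chosen $c$ and $y$ it evaluates to a nonzero multiple of $\lambda\,c\circ y$ or of $\lambda y$, and I would record its value.

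The main obstacle is Step 2: keeping track of the signs $\eps$ in the two double brackets so as to confirm that $\lambda\neq0$ and that no cross term $x_{\rho}\circ x_{\rho'}$ leaks into another block. If the signs produce cancellation, I would instead take $c=x_\rho$ alone and use $x_\rho\circ x_{\rho'}$ as a bilinear connecting pair, replacing the identity of Step 3 by its fully linearized form. The dimension count in Step 4 is insensitive to this choice.
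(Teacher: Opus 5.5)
Your proof is correct, but it reaches the contradiction by a different route from the paper's. At $n=1$ you and the paper both simply appeal to the identification with the complex Albert product.

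For $n\geq2$ the paper uses no Jordan structure theory. It fixes four explicit doubled spinorial roots and builds $u=x_A+\sigma_Ax_{A'}\in T_{12}$ and $w=x_B+\sigma_Bx_{B'}\in T_{13}$ with $u^2=E_1+E_2$ and $w^2=E_1+E_3$. It shows $u\circ w=0$ by counting sign agreements on the last $m=4n$ coordinates (three or $m-3$, never one). It then evaluates the Jordanator at $X=u+w$, $Y=E_2$ to get $-\tfrac14u$.

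You instead argue by contradiction from the Peirce identity $4c\circ(c\circ y)=\lambda y$. You obtain a kernel vector of $y\mapsto c\circ y$ from $\dim T_{13}=2^{4n-1}>8n=\dim T_{23}$. This is the same mechanism with the blocks $12$ and $13$ exchanged: for arbitrary $u\in T_{12}$ the paper's Jordanator equals $w\circ(w\circ u)-\tfrac14u$, so its hypothesis on $u^2$ is not actually needed.

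Your version needs only one connecting element and no sign bookkeeping beyond $\lambda\neq0$. It also ties the failure to the Peirce-dimension mismatch $8n\neq2^{4n-1}$ that makes the cone nonsymmetric. The paper's version gives an explicit, machine-checkable identity-level witness that relies on no general theorem about Jordan algebras.

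Two points need repair.

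First, the explicit Jordanator you name does not work. Since $c^2\circ y=\tfrac{\lambda}{2}y$, both terms of $(c^2\circ y)\circ c-c^2\circ(y\circ c)$ are multiples of $c\circ y$, so it vanishes on your kernel vector. Replacing one copy of $c$ by $E_1$ in its linearization also gives $0=0$. Put the idempotent in the second slot instead: take $X=c+y$ and $Y=E_3$. Use $E_3\circ c=0$, $E_3\circ y=\tfrac12y$, $E_3\circ p=\tfrac12p$ for $p=c\circ y\in T_{23}$, and $y^2\in\C(E_1+E_3)$ (zero if $y$ is a root vector). Then
\[
 (X^2\circ E_3)\circ X-X^2\circ(E_3\circ X)=c\circ(c\circ y)-\tfrac{\lambda}{4}\,y .
\]
This proves Step~3 instead of recalling it, and gives the witness $-\tfrac{\lambda}{4}y\neq0$.

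Second, the count in Step~4 uses $T_{12}\circ T_{13}\subseteq T_{23}$, not the inclusion $T_{12}\circ T_{23}\subseteq T_{13}$ checked in Step~1. It follows the same way: $x_\rho\circ x_\sigma$ is spanned by the vectors $x_{\rho+\sigma-f_i}$, and only $i=1$ gives an admissible frame profile, namely $(0,1,1)$.

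Your worry in Step~2 is unnecessary. The two contributions to $c^2=2x_\rho\circ x_{\rho'}$ land on $E_1$ and $E_2$ separately, so nothing cancels, and only the $E_1$-coefficient of $c^2$ enters the displayed Jordanator.
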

\begin{proof}
At $n=1$ this is the Albert product
\cite[Remark 8.1]{epPublished}; see also \cite{marraniTruini}.
For $n\geq2$, put $m=N-4=4n\geq8$ and use doubled vectors
\begin{equation}\label{eq:JordanRoots}
 \begin{split}
 a&=(1,1,-1,1,\underbrace{1,\ldots,1}_{m-1},-1),\\
 b&=(1,1,-1,-1,-1,-1,-1,
       \underbrace{1,\ldots,1}_{m-4},-1),\\
 a'&=2f_1+2f_2-a,\qquad b'=2f_1+2f_3-b.
 \end{split}
\end{equation}
All four are doubled spinorial tip roots. Let $A=a/2$, $A'=a'/2$,
$B=b/2$, $B'=b'/2$. The bracket signs give
\[
 x_A\circ x_{A'}=\tfrac{\sigma_A}{2}(E_1+E_2),\qquad
 x_B\circ x_{B'}=\tfrac{\sigma_B}{2}(E_1+E_3),
 \qquad \sigma_A,\sigma_B\in\{\pm1\}.
\]
The two coefficients on each diagonal pair are equal by
\eqref{eq:epsilon}; their signs can also be evaluated directly
using \eqref{eq:coefficients}. Each individual root vector has
square zero. Every cross product between $\{x_A,x_{A'}\}$ and
$\{x_B,x_{B'}\}$ is zero: on the last $m$ coordinates the
relevant sign patterns agree in either three or $m-3$ positions,
whereas a surviving orthogonal output would require agreement
in one position. This is a root-membership calculation and
does not depend on the value of $\sigma_A$ or $\sigma_B$.

Set $u=x_A+\sigma_Ax_{A'}$ and
$w=x_B+\sigma_Bx_{B'}$. Thus
$u^2=E_1+E_2$, $w^2=E_1+E_3$, and $u\circ w=0$.
For $X=u+w$, $Y=E_2$, the eigenvalues in
\eqref{eq:Peirce} give
\begin{equation}\label{eq:Jordanator}
 (X^2\circ Y)\circ X-X^2\circ(Y\circ X)
 =\tfrac12u-\tfrac34u=-\tfrac14u\ne0.
\end{equation}
This violates the Jordan identity.
\end{proof}
The witness is constructed in the complex root algebra itself.
For the basis convention used by the exact checker, the signs
are $\sigma_A=1$ and $\sigma_B=-1$ at $n=2,3$.

\section{Unscaled root-frame obstructions}\label{sec:arithmetic}
Throughout this section, $A_2$ denotes the root lattice with
Gram matrix $\left(\begin{smallmatrix}2&-1\\-1&2\end{smallmatrix}\right)$.
An orthogonal frame of $m$ copies has rank $2m$. The scale is fixed by this Gram matrix. We discuss scaled
lattices $A_2(c)$ after the theorem.
\begin{theorem}\label{th:frameNo}
For $n\geq2$, an orthogonal $A_2^m$ frame in $\Lambda_n$
has $m\leq\lfloor N/3\rfloor$. In particular, none has full rank.
\end{theorem}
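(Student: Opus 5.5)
The plan is to reduce the problem to a combinatorial statement about the orthogonal roots. By Proposition~\ref{prop:lattice}, for $n\geq2$ the squared-length-two vectors of $\Lambda_n$ are exactly $\Phi_O=\{\pm k_i\pm k_j\}$. An orthogonal $A_2^m$ frame with unscaled roots therefore consists of $m$ mutually orthogonal copies of the $A_2$ root system $\{\pm r,\pm s,\pm(r+s)\}$ inside $D_N$. Here $r,s\in\Phi_O$ have $(r,s)=-1$, and vectors in different copies are orthogonal.

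The first step classifies a single $A_2$ inside $D_N$. Write $r=\pm k_i\pm k_j$ and $s=\pm k_p\pm k_l$. The condition $(r,s)=-1$ forces them to share exactly one index, with opposite signs there. Hence $r+s=\pm k_i\pm k_l$ for three distinct indices. The whole copy is then supported on a $3$-element set $S\subseteq\{1,\ldots,N\}$, and it spans the plane of vectors in $\R^S$ orthogonal to a fixed signed vector $\pm k_a\pm k_b\pm k_c$. This is the standard embedding $A_2\subset D_3$.

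The second step shows that two orthogonal copies have disjoint supports. I expect this to be the main obstacle. The danger is that roots such as $k_1+k_2$ and $k_1-k_2$ are orthogonal despite overlapping supports, so orthogonality alone does not force disjointness. Suppose copies with supports $S$ and $S'$ share an index, and every root of one copy is orthogonal to every root of the other. For each $i\in S$, some root of the first copy involves $k_i$ with a nonzero coefficient. Consider the orthogonal projections of both planes onto the coordinates in $S\cap S'$.

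If $|S\cap S'|=1$, say the common index is $i$, take roots $r$ from the first copy and $r'$ from the second, each containing $k_i$. Then $(r,r')=\pm1\neq0$, a contradiction. If $|S\cap S'|\geq2$, each plane lies in $\R^{S\cup S'}$, which has dimension at most $4$. A direct check with the explicit form (the plane orthogonal to $\pm k_a\pm k_b\pm k_c$ within $\R^S$) rules this out. The first plane contains $\pm k_a\mp k_b$-type vectors on the two shared coordinates, and the second plane contains vectors with nonzero inner product with them. Concretely, in each case one exhibits a pair of roots, one from each copy, sharing exactly one coordinate, which gives inner product $\pm1$. This case check is finite, since it concerns only $S\cup S'$ with at most $4$ or $3$ coordinates, so it can be done by hand.

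Given disjointness, the $m$ supports are pairwise disjoint $3$-subsets of $\{1,\ldots,N\}$. This gives $3m\leq N$, that is, $m\leq\lfloor N/3\rfloor$. Full rank would need $2m=N$, hence $m=N/2>N/3$, which is impossible. This proves the theorem.
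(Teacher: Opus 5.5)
Your proof is correct and follows the same route as the paper. You reduce to $\Phi_O$ via Proposition~\ref{prop:lattice}, place each $A_2$ on a three-coordinate block, show that orthogonal copies have disjoint blocks, and count. The one difference is the disjointness step, which the paper handles without cases: it projects a root of the second copy onto the first block, where the projection must lie on the complement line spanned by a vector with three nonzero coordinates, and so must vanish. Your own observation already works uniformly: for any common index $i$, roots supported on $\{i,j\}$ and $\{i,j'\}$ with $j\neq j'$ have scalar product $\pm1$, so the case split on $|S\cap S'|$ is unnecessary.
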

\begin{proof}
By Proposition~\ref{prop:lattice}, its roots belong to $\Phi_O$.
If $\alpha,\beta\in\Phi_O$ have $(\alpha,\beta)=-1$,
their two-coordinate supports meet in exactly one position.
The $A_2$ therefore lies in a three-coordinate block. The
orthogonal complement of its two-dimensional plane inside
that block is a line generated by a vector with all three
coordinates nonzero.

An orthogonal $D_N$ root has at most two nonzero coordinates.
Its projection to the first block must lie on that complement
line, and so must be zero. Consequently every root of a
second orthogonal $A_2$ avoids the first block. Iterating gives
disjoint blocks and $3m\leq N$. Full rank would require
$m=N/2$, a contradiction.
\end{proof}
At $n=1$, \eqref{eq:a2frame} provides the full-rank frame.
Its index is nine, and its glue is the tetracode. More generally,
if an unscaled full-rank $A_2^{N/2}$ frame lay in a unimodular
lattice, determinant comparison would require index $3^{N/4}$.
The theorem prevents this configuration here for every $n\geq2$.

\begin{remark}[Scaled frames]\label{rem:scaled}
The obstruction does not rule out orthogonal frames of scaled
$A_2$ lattices. Multiply an $A_2^4$ root frame in the coordinate
$E_8$ model by two. Its vectors now belong to $D_8$, have
squared length eight and mutual scalar product $-4$ in each
pair; it is an $A_2(4)^4$ frame. For every odd $n\geq3$,
$8\mid N$, and blockwise copies give a full-rank
$A_2(4)^{N/2}$ frame in $D_N\subset\Lambda_n$.
The supplement gives an integer certificate. Full-rank scaled
frames therefore exist at these levels.
\end{remark}

For the locked families, the norm-two shell gives an even
simpler rank obstruction.
\begin{proposition}\label{prop:e6frame}
Let $\Lambda_n^{(6)}$ be the lattice generated by the locked
$E_6^{(n)}$ roots. It has a full-rank unscaled $A_2$ frame
exactly at $n=1$, where $A_2^3\subset E_6$ has index three.
For the locked $E_7^{(n)}$ lattice, such a full-rank frame
is impossible at every level because its rank is odd.
\end{proposition}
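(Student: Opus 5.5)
The plan is to reduce the locked $E_6^{(n)}$ case to the argument of Theorem~\ref{th:frameNo}, and to settle the $E_7^{(n)}$ case by parity of the rank. Throughout, I use that the locked lattices are sublattices of $\Lambda_n$. They are generated by roots of $\Phi$ lying in the relevant coordinate subspace, so they sit inside $\Lambda_n$ intersected with that subspace.

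\emph{Step 1: $E_7^{(n)}$.} By \eqref{eq:locked}, the locked $E_7^{(n)}$ lattice has rank $N-1=4n+3$, which is odd. An orthogonal frame of $m$ unscaled $A_2$ lattices spans a sublattice of rank $2m$. Hence it can never have full rank, at any level, and this case is purely formal.

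\emph{Step 2: norm-two vectors of $\Lambda_n^{(6)}$ for $n\ge2$.} A vector of $\Lambda_n^{(6)}$ with squared length two lies in $\Lambda_n$. By Proposition~\ref{prop:lattice} it therefore lies in $\Phi_O$, so it has at most two nonzero integral coordinates. It also satisfies $v_{N-2}=v_{N-1}=v_N$. Three equal coordinates, among at most two nonzero ones, must all vanish. So the norm-two shell is exactly the $D_{N-3}$ root system on coordinates $1,\dots,N-3$.

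\emph{Step 3: counting blocks.} Now I repeat the block argument of Theorem~\ref{th:frameNo} verbatim inside $D_{N-3}$. Each $A_2$ occupies a three-coordinate block, and orthogonal copies occupy disjoint blocks. This gives $3m\le N-3=4n+1$. Full rank needs $2m=N-2=4n+2$, that is $m=2n+1$. Then $3(2n+1)=6n+3>4n+1$, which is a contradiction for every $n\ge1$ at which Step 2 applies, in particular for all $n\ge2$.

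\emph{Step 4: the case $n=1$.} Here I identify $\Lambda_1^{(6)}$ with the $E_6$ root lattice. The root count is $|D_5|+2^{5}=40+32=72$ with rank six, all roots have squared length two, and it is the standard $E_6\subset E_8$ cut out by two orthogonal $A_1$ conditions. In $E_6$ the subsystem $A_2^3$ (the trinification subalgebra) gives a full-rank orthogonal frame. Determinant comparison, $\det A_2^3=27$ against $\det E_6=3$, gives index $\sqrt{27/3}=3$.

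I expect the main obstacle to be Step 2. One must be sure that no spinorial-coset vector of the locked lattice has squared length two. This is handled by the inclusion in $\Lambda_n$ together with the bound $N/4>2$ already used in Proposition~\ref{prop:lattice}, so no separate analysis of the locked spinorial coset is needed.
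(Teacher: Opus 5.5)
Your proof is correct and follows the paper's outline. For $n\ge2$ the norm-two shell of $\Lambda_n^{(6)}$ is the $D_{N-3}$ root system, $n=1$ is the classical trinification $A_2^3\subset E_6$, and the $E_7^{(n)}$ case is the parity of $N-1$. The differences lie in the sub-steps:
\begin{itemize}
\item \emph{The shell.} You deduce it from $\Lambda_n^{(6)}\subset\Lambda_n$ and Proposition~\ref{prop:lattice}, then let the locking equations kill the last three coordinates. The paper instead describes $\Lambda_n^{(6)}$ directly as generated by $D_{N-3}$ and $s_0$. It writes integral vectors with a common locked coordinate $\ell$ and shows their norm is at least four when $\ell\ne0$ and at least $n+1$ on the half-integral coset. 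Your route is shorter and reuses an established result. The paper's is self-contained and yields an explicit description of the locked lattice.
\item \emph{The conclusion.} Your block count from Theorem~\ref{th:frameNo} is valid but more than you need. Once every unscaled $A_2$ root lies in $D_{N-3}$, any frame spans rank at most $N-3<N-2$; this is how the paper finishes.
\end{itemize}

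There is one slip in Step~4. The equations $v_{N-2}=v_{N-1}=v_N$ express orthogonality to $k_{N-2}-k_{N-1}$ and $k_{N-1}-k_N$. These have scalar product $-1$ and span an $A_2$, not two orthogonal $A_1$'s; orthogonality to two orthogonal roots of $E_8$ would cut out $D_6$. The standard $E_6$ is the centralizer of an $A_2$ in $E_8$. Your count of $72$ roots of equal length in rank six already forces the identification with $E_6$, so the conclusion and the index computation $\sqrt{27/3}=3$ stand.
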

\begin{proof}
Put $M=N-3$. The $E_6^{(n)}$ lattice is generated by $D_M$
on the first $M$ coordinates and $s_0$; all locked spinorial
roots differ from $s_0$ by that subgroup and multiples of
$2s_0$. Its integral vectors have a common integer last
coordinate $\ell$ and first-coordinate sum congruent to
$\ell$ modulo two. If $\ell\ne0$, their norm is at least four
when $|\ell|=1$ (three locked squares and at least one free
square), and at least twelve when $|\ell|\geq2$.
If $\ell=0$, the squared-length-two vectors are exactly
$D_M$ roots. A half-integral vector has norm at least
$N/4=n+1$, which exceeds two at $n\geq2$.
All unscaled $A_2$ roots then lie in a subspace of rank
$M=N-3$, smaller than the lattice rank $N-2$; no full-rank
frame can occur. The level-one trinification lattice is
classical. The $E_7$ assertion follows from rank $N-1$.
\end{proof}
Above level one the norm-two shell spans $D_N$ and leaves out
the spinorial coset. Integral constructions using that coset,
as well as scaled frames and ternary gradings, can still be studied.

\section{A positive real Clifford model and its cone}\label{sec:cone}
We now build the positive real Clifford model $H_n^{\rm Cl}$.
Theorem~\ref{th:identification} matches its complexification with
the root tip $(T_n,\circ)$, giving the published $H$-algebra
realization \cite[Section 8]{epPublished} in our conventions.

\subsection{Clifford blocks and their multiplication}
Put $d=8n$ and $b=2^{4n-1}$. Let $V=\R^d$ and let
$S=S^+\oplus S^-$ be a minimal graded real module for
$\Cl(0,d)$, with $\dim S^+=\dim S^-=b$ and positive
invariant inner products \cite{abs}. Clifford multiplication has block form
\begin{equation}\label{eq:clifford}
 \Gamma(v)=\begin{pmatrix}0&-C(v)^{\mathsf T}\\ C(v)&0\end{pmatrix},
 \quad \Gamma(v)\Gamma(w)+\Gamma(w)\Gamma(v)
 =-2\langle v,w\rangle I_S,
\end{equation}
where $C(v):S^+\to S^-$. In particular,
$C(v)^{\mathsf T}C(v)=\|v\|^2I$ and
$C(v)C(v)^{\mathsf T}=\|v\|^2I$.
Define $\beta:S^+\times S^-\to V$ by
\begin{equation}\label{eq:beta}
 \langle\beta(s,t),v\rangle=\langle C(v)s,t\rangle.
\end{equation}

The underlying space is
\begin{equation}\label{eq:Hcl}
 H_n^{\rm Cl}=\R^3\oplus S^+\oplus S^-\oplus V,
 \qquad X=\begin{pmatrix}x&s&t\\s^*&y&v\\t^*&v^*&z\end{pmatrix}.
\end{equation}
The symbol $*$ denotes the metric dual block. The upper block
product is $s\cdot v=C(v)s$; the remaining block products use
its metric adjoints and inner products. We follow the orientation
$A_{12}=S^+$, $A_{23}=V$, $A_{13}=S^-$ fixed in \cite{amsErr}. The isometry
$\|C(v)s\|^2=\|v\|^2\|s\|^2$ supplies the rank-three
metric nilalgebra, and hence the Vinberg construction
\cite{ac,cubics}. The strictly upper algebra is associative
because a product of three strictly upper matrices is zero;
the full Hermitian multiplication need not be associative.

For the Hermitian product $X\odot Y=\frac12(XY+YX)$, it is
convenient to give the square first. Its diagonal entries are
\begin{equation}\label{eq:clSquareDiag}
 x^2+\|s\|^2+\|t\|^2,\quad
 y^2+\|s\|^2+\|v\|^2,\quad
 z^2+\|t\|^2+\|v\|^2,
\end{equation}
and upper off-diagonal entries
\begin{equation}\label{eq:clSquareOff}
 (x+y)s+C(v)^{\mathsf T}t,\quad
 (x+z)t+C(v)s,\quad
 (y+z)v+\beta(s,t).
\end{equation}
Polarization defines $X\odot Y$. Its unit is $I=(1,1,1,0,0,0)$,
and its frame blocks have dimensions $b,b,d$. The cubic is
\begin{equation}\label{eq:cubic}
 D(X)=xyz-x\|v\|^2-y\|t\|^2-z\|s\|^2
       +2\langle C(v)s,t\rangle.
\end{equation}
The block order fixes the cubic and the Cholesky formulas below.
Permuting the blocks also changes the coordinate expressions of
the cone and its dual invariant.
The trace is $\tr X=x+y+z$. With powers associated as
$X^3=X\odot(X\odot X)$, a direct expansion gives
\begin{equation}\label{eq:newton}
 D(X)=\tfrac16\bigl((\tr X)^3-3\tr X\tr(X^2)+2\tr(X^3)\bigr).
\end{equation}
The identity uses the indicated association of $X^3$. Higher
powers require their own association conventions.

\subsection{The normalized root-tip identification}
The published paper uses the tip $T_{(1,1)}$ with frame roots
$p_1=k_1+k_N$, $p_2=k_1-k_N$, $p_3=-k_2-k_3$
\cite[(8.1)--(8.5)]{epPublished}; our tip is $T_{(0,2)}$.
The signed coordinate permutation
\begin{equation}\label{eq:publishedTipMap}
 \begin{split}
 k_1&\longmapsto-k_3,\quad k_2\longmapsto-k_1,\quad
 k_3\longmapsto-k_2,\quad k_4\longmapsto-k_5,\\
 k_j&\longmapsto k_{j+1}\ (5\leq j\leq N-1),\qquad
 k_N\longmapsto k_4
 \end{split}
\end{equation}
has four sign changes and belongs to the Weyl group of $D_N$.
It maps $(p_1,p_2,p_3)$ to $(f_2,f_3,f_1)$ and maps the
published vector block $12$ to our vector block $23$.
An inner Spin lift, followed by a Cartan torus adjustment of the
three frame root vectors, gives a pointed algebra isomorphism.
The lift supplies the scalar factors needed to preserve the
$\eps$ coefficients. Its action is available because the
orthogonal algebra lies in the Jacobi nucleus.

\begin{theorem}[Normalized published $H$-algebra realization]
\label{th:identification}
For every $n\geq1$ there is a complex algebra isomorphism
\begin{equation}\label{eq:tipIso}
 \varphi_n:(T_n,\circ)\longrightarrow
 (H_n^{\rm Cl}\otimes_\R\C,\odot)
\end{equation}
that preserves the ordered frame and trace form. In particular,
the Newton cubic of the root tip maps to \eqref{eq:cubic}.
The antilinear involution
$\kappa_n(X)=\varphi_n^{-1}(\overline{\varphi_n(X)})$
has a fixed real algebra $T_n^+$ isomorphic to $H_n^{\rm Cl}$,
with positive trace form. On that real form
\begin{equation}\label{eq:tipConeTransport}
 \Omega_n=\varphi_n^{-1}(\Omega_n^{\rm Cl})
\end{equation}
is a homogeneous convex cone. The realization depends on the chosen isometry and intertwiner.
Compatibility with an integral order or a character grading is
a separate problem.
\end{theorem}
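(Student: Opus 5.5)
Following Section~\ref{sec:cone}, we build $\varphi_n$ block by block along the Peirce decomposition \eqref{eq:blocks}, and then check that the products agree. Send $E_i$ to the diagonal idempotents $e_{ii}$ of $H_n^{\rm Cl}\otimes\C$. The block $T_{23}$ (the $8n$ roots $-k_3\pm k_j$, $j\geq5$) goes to $V_\C$. The block $T_{12}$ (spinorial roots with $\epsilon_4=+1$) goes to $S^+_\C$ and $T_{13}$ ($\epsilon_4=-1$) goes to $S^-_\C$; compare \eqref{eq:spinFrame}. The eigenvalue formula \eqref{eq:Peirce} and the diagonal part of \eqref{eq:clSquareDiag}--\eqref{eq:clSquareOff} already match: in both algebras $E_i\circ$ acts by $0,\tfrac12,1$ on the corresponding blocks. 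It remains to match the three off-diagonal products $T_{12}\times T_{23}\to T_{13}$, $T_{12}\times T_{13}\to T_{23}$, $T_{13}\times T_{23}\to T_{12}$, together with the diagonal values of products within each block.

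For the off-diagonal products, first use the $\so(8n,\C)$ on coordinates $5,\ldots,N$. It fixes $I^-$, so by the remark after \eqref{eq:spinFrame} it acts by derivations. Under this algebra $T_{23}$ is the vector module and $T_{12},T_{13}$ are the two opposite half-spin modules. The map $T_{23}\times T_{12}\to T_{13}$, $(v,s)\mapsto v\circ s$, is then $\so(8n)$-equivariant. Since $\Hom_{\so(8n)}(V\otimes S^+,S^-)$ is one-dimensional, this map equals $\lambda$ times the complexified Clifford map $C(v)s$, for some scalar $\lambda$.

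Next we show that the root tip satisfies Clifford relations. For tip vectors $v\in T_{23}$ and $s\in T_{12}$, the Jacobi nucleus property (used as in Proposition~\ref{prop:unit}) gives $(v\circ s)$ landing in $T_{13}$. Applying $v$ again and projecting to $T_{12}$, we get $v\circ(v\circ s)=\mu\,q(v)s$, where $q(v)$ is read off from $v\circ v=q(v)(E_2+E_3)$. The constant $\mu$ is fixed by one explicit root computation: take $v=x_{-k_3+k_5}+c\,x_{-k_3-k_5}$ and one spinorial $x_\sigma$, and use the signs \eqref{eq:epsilon} and the rule $[x_\rho,x_{-\rho}]=-h_\rho$. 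This is the step the introduction calls ``a single root calculation fixes the scalar coefficient''. We expect it to be the main obstacle: one must track the $\eps$ signs through two nested brackets and check that the result is $\mu=\tfrac14$, independent of the chosen roots. Equivariance and irreducibility then extend the identity to all $v$ and $s$.

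Once $\mu$ is fixed, rescale $\varphi_n$ on $V_\C$ by a square root. This makes $\lambda$ compatible with $C(v)^{\mathsf T}C(v)=\|v\|^2$ and with the normalization $v\circ v=\|v\|^2(E_2+E_3)$ implicit in \eqref{eq:clSquareDiag}. Choose the identifications $T_{12}\simeq S^+_\C$ and $T_{13}\simeq S^-_\C$ so that the invariant pairings given by $s\circ t$ projected to $E_1$ match the inner products. The half-spin bilinear form is unique up to scale, so each pairing needs only one constant; the cross pairings are forced by $\beta$ and \eqref{eq:beta}. Any leftover scalars are absorbed by a Cartan torus adjustment, as in the paragraph before the theorem.

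With all products matched on blocks, $\varphi_n$ is an algebra isomorphism. It preserves the frame by construction and preserves the trace because both traces are the sum of the diagonal coordinates. The Newton formula \eqref{eq:newton} then transports the cubic. Complex conjugation on $H_n^{\rm Cl}\otimes\C$ is an antilinear algebra automorphism, so $\kappa_n$ is an antilinear involutive automorphism of $T_n$. Its fixed set $T_n^+$ is carried isomorphically by $\varphi_n$ onto $H_n^{\rm Cl}$. The trace form $\tr(X\odot X)$ is positive on $H_n^{\rm Cl}$ by \eqref{eq:clSquareDiag}. Finally, $\Omega_n^{\rm Cl}$ is a homogeneous convex cone by the Vinberg rank-three construction \cite{ac,cubics} applied to the metric nilalgebra $\|C(v)s\|=\|v\|\|s\|$. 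Hence its preimage $\Omega_n$ under the linear isomorphism $\varphi_n$ is a homogeneous convex cone as well. The dependence on the chosen isometry and intertwiner is exactly the freedom in the scalars fixed above.
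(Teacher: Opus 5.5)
Your route differs from the paper's and is sound as far as it goes. You identify $T_{23},T_{12},T_{13}$ with $V_\C,S^+_\C,S^-_\C$ as $\so(8n,\C)$-modules and use multiplicity one to make each block product a scalar multiple of the model map. The paper instead derives Clifford relations for $\Gamma_T$ and then uses the classification of Clifford modules and uniqueness of invariant forms.

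The gap is in the scalar bookkeeping. After Schur, the root tip on the blocks is described by six constants:
\begin{itemize}
\item[] $v\circ s=\lambda_1C(v)s$, $v\circ t=\lambda_2C(v)^{\mathsf T}t$, $s\circ t=\lambda_3\beta(s,t)$;
\item[] $v\circ v'=\theta_V\langle v,v'\rangle(E_2+E_3)$, $s\circ s'=\theta_+\langle s,s'\rangle(E_1+E_2)$, $t\circ t'=\theta_-\langle t,t'\rangle(E_1+E_3)$.
\end{itemize}
The model values are $\lambda_i=\frac12$ and $\theta_V=\theta_\pm=1$. Your only freedom is the three block rescalings $a,b,c$ on $V,S^+,S^-$. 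Under them $\theta_V\mapsto a^{-2}\theta_V$, $\lambda_1\mapsto c\lambda_1/(ab)$, $\lambda_2\mapsto b\lambda_2/(ac)$, $\lambda_3\mapsto a\lambda_3/(bc)$, and so on.

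The three combinations $\lambda_1\lambda_2/\theta_V$, $\theta_-\lambda_1/(\theta_+\lambda_2)$ and $\theta_V\lambda_3/(\theta_+\lambda_2)$ are unchanged by these rescalings. They are in fact intrinsic to the framed algebra: the constant in $v\circ(v\circ s)=\mu\,q(v)s$, and the ratios of $B_T(v\circ s,t)$, $B_T(s,v\circ t)$ and $B_T(v,s\circ t)$. So they must equal $\frac14,1,1$. Your single root calculation, $\mu=\frac14$ (the paper's $c_+=1$), gives only the first.

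Your assertion that the cross pairings are ``forced by $\beta$ and \eqref{eq:beta}'' is a property of the model, not of $(T_n,\circ)$. Nothing in your argument relates $s\circ t$ or $v\circ t$ to $v\circ s$. A Cartan torus adjustment cannot absorb the discrepancy either. Torus elements fixing the frame are automorphisms of the tip and leave these invariants unchanged, while the other torus elements move $I^-$.

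The same problem affects same-block products. Equivariance gives only $U\circ W=\alpha(U,W)E_i+\alpha'(U,W)E_j$ with two invariant forms, and one rescaling cannot normalize both coefficients. Also, $s\circ t\in T_{23}$ has no $E_1$ component; you presumably meant $s\circ s'$ and $t\circ t'$.

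The missing ingredient is the associativity of the trace form $B_T(a,b)=\tr(a\circ b)$. The paper imports it from \cite[Propositions 8.8 and A.2]{epPublished} through the signed permutation \eqref{eq:publishedTipMap}.
\begin{itemize}
\item[] Applied to $B_T(U\circ W,E_i)=B_T(U,W\circ E_i)$, it gives $\alpha=\frac12(\alpha+\alpha')$, hence $\alpha=\alpha'$.
\item[] Applied to $B_T(v\circ s,t)=B_T(v,s\circ t)=B_T(s,v\circ t)$, it gives \eqref{eq:rootAdjoint}, that is, $\theta_-\lambda_1=\theta_+\lambda_2=\theta_V\lambda_3$.
\end{itemize}
With these relations and your normalization, the rescalings close up to a common sign. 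The rest of your argument (real form, positivity, cone) then goes through. Without associativity you would need two further explicit root computations, for instance of $s\circ(s\circ v)$ and $t\circ(t\circ v)$. These fix $\lambda_1\lambda_3/\theta_+$ and $\lambda_2\lambda_3/\theta_-$. You would also need a direct check of the equal diagonal coefficients.
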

\begin{proof}
Let $B_T(a,b)=\tr(a\circ b)$. It is associative by
\cite[Proposition 8.8 and Proposition A.2]{epPublished},
transported by \eqref{eq:publishedTipMap}. The frame eigenvalues
and root membership give
\[
 T_n=\bigoplus_{i=1}^3\C E_i\oplus S^+\oplus S^-\oplus V,
 \qquad S^+=T_{12},\quad S^-=T_{13},\quad V=T_{23}.
\]
For $U,W\in T_{ij}$,
\begin{equation}\label{eq:rootBlockForms}
 U\circ W=q_{ij}(U,W)(E_i+E_j),\qquad
 q_{ij}=\tfrac12 B_T|_{T_{ij}}.
\end{equation}
Each root $r$ in $T_{ij}$ pairs with $r'=f_i+f_j-r$:
$x_r\circ x_{r'}=\frac{\sigma_r}{2}(E_i+E_j)$,
$\sigma_r\in\{\pm1\}$; all other same-block root pairs vanish.
This follows by the bracket rule and bimultiplicativity of $\eps$,
as in \cite[Appendix A]{epPublished}.
Thus the three forms are symmetric and nondegenerate.
Write $q_+=q_{12}$, $q_-=q_{13}$ and $q_V=q_{23}$.
The Lie algebra $\so(V,q_V)\simeq\so(8n,\C)$ fixes the frame,
acts by product derivations, and acts on $S^\pm$ as the
two irreducible half-spin representations.

Define from the root product
\begin{equation}\label{eq:rootCliffordMaps}
 C_T(v)s=2v\circ s,\qquad C_T^\dagger(v)t=2v\circ t,\qquad
 \beta_T(s,t)=2s\circ t.
\end{equation}
Root membership places these values in $S^-$, $S^+$ and $V$,
respectively. Associativity of $B_T$ yields
\begin{equation}\label{eq:rootAdjoint}
 q_-(C_T(v)s,t)=q_+(s,C_T^\dagger(v)t)
             =q_V(v,\beta_T(s,t)).
\end{equation}
The maps are equivariant. The Clifford/exterior-algebra
realization of a half-spin endomorphism module consists
of even exterior powers of $V$, with one middle-degree summand.
It contains no trace-free symmetric two-tensor representation.
Hence
\[
 \dim\Hom_{\so(V)}(\operatorname{Sym}^2 V,\operatorname{End}S^\pm)=1.
\]
It follows that
\[
 C_T^\dagger(v)C_T(w)+C_T^\dagger(w)C_T(v)
       =2c_+q_V(v,w)I_{S^+},
\]
and similarly on $S^-$, with scalar $c_-$.

A short root calculation fixes both constants.
Put $v_+=-k_3+k_5$, $v_-=-k_3-k_5$, $a=-k_4+k_5$,
and choose a spinorial root $r\in T_{12}$ with coordinates
$r_4=+1/2$, $r_5=-1/2$; let $t=r+a\in T_{13}$.
Write $x_{v_+}\circ x_{v_-}
=\frac{\sigma_V}{2}(E_2+E_3)$.
The only surviving frame terms give
\[
 C_T(x_{v_+})x_r
   =-\eps(v_+,f_2)\eps(a,r)x_t,\qquad
 C_T^\dagger(x_{v_-})x_t
   =-\eps(v_-,f_3)\eps(-a,t)x_r .
 \]
Since $\eps(a,t)=-\eps(a,r)$ and
$\eps(a,v_-)=\eps(v_-,a+v_-)=\eps(v_-,f_3)$,
the product of these two coefficients is
$-\eps(v_+,f_2)\eps(a,v_-)=\sigma_V$.
The reversed term on $x_r$ vanishes by root membership,
and $2q_V(x_{v_+},x_{v_-})=\sigma_V$.
Thus $c_+=1$; the reversed calculation on $x_t$ gives $c_-=1$.
Consequently
\begin{equation}\label{eq:rootCliffordNormalized}
 \Gamma_T(v)=
 \begin{pmatrix}0&-C_T^\dagger(v)\\C_T(v)&0\end{pmatrix},
 \qquad
 \Gamma_T(v)\Gamma_T(w)+\Gamma_T(w)\Gamma_T(v)
       =-2q_V(v,w)I.
\end{equation}

Choose a complex isometry of $(V,q_V)$ with the
complexification of $\R^{8n}$. The module $S^+\oplus S^-$
has dimension $2^{4n}$, so \eqref{eq:rootCliffordNormalized}
is the irreducible module of the complex Clifford algebra.
The classification of Clifford modules \cite{abs} identifies it,
with the chosen chirality convention, with the complexification
of the real graded module used in \eqref{eq:clifford}.
The form $q_+\oplus q_-$ makes all $\Gamma_T(v)$ skew-adjoint.
Two such invariant nondegenerate forms differ by a scalar:
their ratio commutes with the irreducible Clifford action.
A common complex rescaling of the spinorial intertwiner therefore
makes it an isometry on both chiral summands.
Equation \eqref{eq:rootAdjoint} then matches every mixed map,
including $\beta_T$, and \eqref{eq:rootBlockForms} matches
the diagonal squares. Together with the frame eigenvalues,
polarization proves \eqref{eq:tipIso}.

The real model has positive trace form \eqref{eq:tracePairing}.
Transport of ordinary complex conjugation supplies $\kappa_n$,
its positive fixed algebra and \eqref{eq:tipConeTransport}.
The Newton trace formula proves the cubic assertion.
This gives the published identification with the normalization
of \eqref{eq:tipproduct}.
\end{proof}

\begin{remark}[Real form and induced tip derivations]\label{rem:identification}
The root-vector basis spans a split real coordinate algebra,
whose same-block trace forms pair isotropic vectors. The positive
algebra $T_n^+$ uses the transported conjugation instead. Thus
the root-coordinate grading and the positive real structure are
separate data; their compatibility, including any integral
compatibility, remains to be determined.

The published result
\cite[Theorem 11.4]{epPublished} identifies the derivations of
the tip induced by its ambient Magic Star centre with
$\mathfrak b_{4n}=\so(8n+1,\C)$ for $n>1$.
At $n=1$ they enhance to $\mathfrak f_4$.
This is stronger than the frame-fixing $\so(8n,\C)$ action.
Here the derivations are those induced from the ambient centre.
Determining all abstract derivations of the tip would require
an additional argument.
\end{remark}

\subsection{Cholesky coordinates, convexity and positivity}
For an upper triangular element
\[
 A=\begin{pmatrix}a&b_0&c_0\\0&d_0&e_0\\0&0&f\end{pmatrix},
 \qquad a,d_0,f>0,\ b_0\in S^+,\ c_0\in S^-,\ e_0\in V,
\]
the coordinates of $X=AA^*$ are
\begin{equation}\label{eq:choleskyForward}
 \begin{aligned}
 x&=a^2+\|b_0\|^2+\|c_0\|^2,&y&=d_0^2+\|e_0\|^2,&z&=f^2,\\
 s&=d_0b_0+C(e_0)^{\mathsf T}c_0,&t&=fc_0,&v&=fe_0.
 \end{aligned}
\end{equation}
Substitution in \eqref{eq:cubic}, using the Clifford isometry,
yields $D(AA^*)=(ad_0f)^2$. Conversely,
\begin{equation}\label{eq:choleskyInverse}
 \begin{split}
 f^2&=z,\qquad d_0^2=(yz-\|v\|^2)/z,\qquad
 a^2=D(X)/(yz-\|v\|^2),\\
 e_0&=v/f,\qquad c_0=t/f,\qquad
 b_0=(s-z^{-1}C(v)^{\mathsf T}t)/d_0.
 \end{split}
\end{equation}
Consequently the cone is explicitly
\begin{equation}\label{eq:cone}
 \Omega_n^{\rm Cl}=
 \{X:z>0,\ yz-\|v\|^2>0,\ D(X)>0\}=
 \{AA^*:a,d_0,f>0\}.
\end{equation}
Vinberg's construction makes this an open homogeneous convex
cone with pointed closure \cite{vinberg,ac,cubics}. The inverse
\eqref{eq:choleskyInverse} singles out its positive branch: all
three diagonal Cholesky parameters are positive.

The associated triangular Vinberg group acts simply transitively
on the cone, with dimension $\dim H_n^{\rm Cl}=t_n$.
The cubic is a relative invariant, transformed by the square
of the triangular determinant. The determinant-one subgroup
therefore acts transitively on
\begin{equation}\label{eq:realManifold}
 \mc M_{5,n}=\{X\in\Omega_n^{\rm Cl}:D(X)=1\}.
\end{equation}
At the identity the full logarithmic Hessian is
\begin{equation}\label{eq:hessian}
 -\mathrm{Hess}_I\log D(\delta X,\delta X)
 =\delta x^2+\delta y^2+\delta z^2
 +2\|\delta s\|^2+2\|\delta t\|^2+2\|\delta v\|^2.
\end{equation}
Relative invariance transports this positive form to every
point of the cone. Its restriction to $D=1$ is thus a
positive projective special real metric. All these cones have Vinberg rank three. Increasing $n$ enlarges
the Clifford module while leaving this rank unchanged. For
higher-rank special $T$-algebras, see \cite{rank4}.

\subsection{The Fierz defect, adjoints and the dual cone}
Use the trace pairing
\begin{equation}\label{eq:tracePairing}
 B(X,Y)=xx'+yy'+zz'+2\langle s,s'\rangle
 +2\langle t,t'\rangle+2\langle v,v'\rangle.
\end{equation}
The quadratic gradient $X^\#=\nabla_BD(X)$ has diagonal
entries $yz-\|v\|^2$, $xz-\|t\|^2$, $xy-\|s\|^2$ and
off-diagonal entries
\begin{equation}\label{eq:adjoint}
 C(v)^{\mathsf T}t-zs,\qquad C(v)s-yt,\qquad \beta(s,t)-xv.
\end{equation}
It also equals the quadratic Newton expression
$X^2-(\tr X)X+\frac12((\tr X)^2-\tr(X^2))I$.
Define the nonnegative defect
\begin{equation}\label{eq:defect}
 \Delta(s,t)=\|s\|^2\|t\|^2-\|\beta(s,t)\|^2\geq0.
\end{equation}
For fixed $s$, the vectors $C(e_i)s$ form an orthogonal
family of squared norm $\|s\|^2$ in $S^-$. Bessel's inequality
proves the last assertion. At $n=1$, $b=d=8$ and that
family spans $S^-$ whenever $s\ne0$, so $\Delta\equiv0$.
At $n\geq2$, $b>d$; unit $s$ and unit
$t\perp C(V)s$ give $\beta(s,t)=0$ and $\Delta=1$.

This also gives an explicit non-Jordan witness for the real
Clifford model. Let $U$ have only the $12$ block $s$ nonzero
and $W$ only the $13$ block $t$ nonzero, with those unit choices.
Then $U^2=E_1+E_2$, $W^2=E_1+E_3$ and $U\odot W=0$.
The calculation in \eqref{eq:Jordanator} applies and yields $-U/4$.
For $X=U+W$, its adjoint is diagonal with entries $(0,-1,-1)$,
and hence
\begin{equation}\label{eq:adjointFailure}
 (X^\#)^\#=E_1\ne0=D(X)X.
\end{equation}
The quadratic Newton adjoint therefore fails the Jordan adjoint
identity above level one.

Under the pairing \eqref{eq:tracePairing}, the dual cone and
its normalized degree-three invariant are
\begin{equation}\label{eq:dual}
 \begin{split}
 D^*(X)&=D(X)+\frac{\Delta(s,t)}x,\\
 (\Omega_n^{\rm Cl})^*&=
 \{X:x>0,\ xy-\|s\|^2>0,\ D^*(X)>0\}.
 \end{split}
\end{equation}
These are the corrected special Vinberg formulas
\cite{amsErr,ams}. At $n\geq2$, $D^*$ is genuinely rational:
$\Delta$ is a nonzero polynomial independent of $x$, so it
cannot be divisible by $x$. It is smooth on the dual cone.
It is homogeneous of degree three. The distinction between a
polynomial and a rational dual invariant is the correction
made in \cite{amsErr}.

The cubic and rational dual satisfy the logarithmic duality identities
\begin{equation}\label{eq:logDual}
 D^*(\nabla_B\log D(X))=D(X)^{-1},\qquad
 \nabla_B\log D^*(\nabla_B\log D(X))=X
 \quad(X\in\Omega_n^{\rm Cl}).
\end{equation}
To see this, let $g$ be the linear action of an element of the
triangular group, let $g^\dagger$ be its adjoint for $B$, and write
$D(gX)=\chi(g)D(X)$. The dual relative invariant obeys
$D^*(g^{-\dagger}Y)=\chi(g)^{-1}D^*(Y)$
\cite{amsErr,cubics}. The chain rule gives
\[
 \nabla_B\log D(gX)=g^{-\dagger}\nabla_B\log D(X),\qquad
 \nabla_B\log D^*(g^{-\dagger}Y)
       =g\nabla_B\log D^*(Y).
\]
At $I$ both gradients equal $I$ and both invariants equal one.
Since every $X$ in the cone is $gI$, substitution proves both
identities in \eqref{eq:logDual}. In particular, the first
gradient belongs to the dual cone. This argument uses $D^*$
on the positive dual domain, where it is smooth.

\subsection{Symmetry and boundary geometry}
\begin{theorem}\label{th:coneSymmetric}
The specified cone $\Omega_n^{\rm Cl}$ is symmetric if and only
if $n=1$, and the same holds for its transported cone $\Omega_n$.
At $n=1$ it is the positive Albert cone. The
projectivization of its nonzero rank-one boundary rays is
the compact Cayley plane $\mathbb O P^2$.
\end{theorem}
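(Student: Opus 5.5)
The strategy is to use the standard fact that a homogeneous convex cone is symmetric exactly when it is self-dual for some inner product, equivalently when its Vinberg $T$-algebra (or Euclidean Jordan algebra) structure is symmetric. I will verify that criterion on the explicit formulas of the previous subsection. The transported statement then follows at once, since $\varphi_n$ restricts to a real linear isomorphism $T_n^+\to H_n^{\rm Cl}$ carrying $\Omega_n$ onto $\Omega_n^{\rm Cl}$ by Theorem~\ref{th:identification}, and symmetry is invariant under linear isomorphism.

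For $n=1$ I will use $b=d=8$ and the vanishing $\Delta\equiv0$ established after \eqref{eq:defect}. The dual invariant \eqref{eq:dual} then equals $D$, and the dual cone under $B$ is described by $x>0$, $xy-\|s\|^2>0$, $D>0$. I will check that this is the same set as \eqref{eq:cone}, which amounts to permuting the roles of the first and third diagonal blocks. The cleanest route avoids comparing inequalities directly. With $b=d=8$, $C(v)$ is octonion multiplication for a suitable identification $S^\pm\simeq V\simeq\mathbb O$, so the Hermitian product \eqref{eq:clSquareDiag}--\eqref{eq:clSquareOff} becomes the Albert product on $3\times3$ octonionic Hermitian matrices. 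Hence $H_1^{\rm Cl}$ is the Euclidean Jordan algebra $\mathfrak h_3(\mathbb O)$, and its positive cone, $\{AA^*\}$ in the Cholesky parametrization, is the cone of squares, which is symmetric. For the boundary, the rank-one elements of $\mathfrak h_3(\mathbb O)$ are the nonnegative multiples of primitive idempotents. The set of primitive idempotents is $F_4/\mathrm{Spin}(9)=\mathbb OP^2$, and this identifies the projectivized rank-one boundary rays, as recalled in Section~2.

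For $n\ge2$ I will argue by contradiction. If $\Omega_n^{\rm Cl}$ were symmetric, then by the Koecher--Vinberg theorem it would be the cone of squares of a Euclidean Jordan algebra. Because it is irreducible of rank three (Vinberg rank three, as noted after \eqref{eq:hessian}), that algebra would be some $\mathfrak h_3(\mathbb K)$ with $\dim\mathbb K\in\{1,2,4,8\}$, so its dimension would be at most $27$. But $t_n=8n+3+2^{4n}>27$ for $n\ge2$. I will also give a second, intrinsic argument that does not depend on the classification. For a symmetric cone the dual characteristic/relative invariant is a polynomial, namely the Jordan norm, with $D^*=D\circ\theta$ for a linear isomorphism $\theta$. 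Section~\ref{sec:cone} shows that $D^*$ is genuinely rational at $n\ge2$, since $\Delta\neq0$ is not divisible by $x$. As a further check, the quadratic adjoint fails $(X^\#)^\#=D(X)X$ by \eqref{eq:adjointFailure}, and this identity holds for every cubic norm of a Jordan algebra.

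The main obstacle is making the rank-three/irreducibility input rigorous for the dimension count, since reducible symmetric cones or a different choice of inner product could a priori evade the comparison. For that reason I would make the rationality argument primary. The relative invariant of the dual cone of a homogeneous cone is determined up to a constant by its transformation law \eqref{eq:logDual}. If the cone were self-dual under some inner product, that invariant would coincide, up to a linear change of variables and a scalar, with the polynomial $D$. This contradicts the rationality of $D^*$.
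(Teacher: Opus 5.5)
Your treatment of $n=1$ is fine. Taking $C(v)s=sv$ on $\mathbb O$ turns \eqref{eq:clSquareDiag}--\eqref{eq:clSquareOff} into the Albert square, so $\Omega_1^{\rm Cl}$ is the cone of squares of $\mathfrak h_3(\mathbb O)$. The rank-one boundary description and the transport to $\Omega_n$ also go through. This is a more explicit version of the paper's appeal to triality.

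\textbf{The gap} is in the direction $n\geq2$, in the argument you chose to make primary. Self-duality of the cone under some inner product gives only a linear map $\theta$ with $(\Omega_n^{\rm Cl})^*=\theta\,\Omega_n^{\rm Cl}$. But $D^*$ is fixed by its transformation law under the specific group $g\mapsto g^{-\dagger}$, with character $\chi^{-1}$. By contrast, $D\circ\theta^{-1}$ transforms under $\theta G\theta^{-1}$, with a character that need not correspond.

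The implication in fact fails even on symmetric cones. Take $\mathrm{Sym}_3^+(\R)$, coordinates as in \eqref{eq:Hcl}, the action $X\mapsto AXA^{\mathsf T}$ for upper triangular $A$, and the trace form. The cubic relative invariant $z(yz-v^2)$ then has dual invariant $\det(Y)^2/\bigl(x(xy-s^2)\bigr)$, which is genuinely rational. So rationality of $D^*$ contradicts symmetry only after you show that $D$ would be a linear transport of the Jordan norm of the hypothetical Euclidean structure. That needs three further inputs: irreducibility of $D$, Vinberg's conjugacy of the simply transitive triangular groups, and the description of polynomial relative invariants as monomials in principal minors. This is at least as much structure theory as the step you wanted to avoid. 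Likewise, \eqref{eq:adjointFailure} concerns $\odot$ only, and a Koecher--Vinberg product need not be $\odot$.

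\textbf{The fix.} Your demoted argument is the one that works. What it lacks is a standard $T$-algebra fact, not an inner-product argument.
\begin{enumerate}
\item A homogeneous cone determines its Vinberg $T$-algebra up to isomorphism. Hence Vinberg rank three is the Jordan rank of any Koecher--Vinberg structure, and the off-diagonal block dimensions $(b,b,d)$ are invariants of the cone.
\item All three off-diagonal blocks are nonzero, so the cone is irreducible. This excludes a half-line times a Lorentz cone.
\item For an irreducible symmetric cone, the $T$-algebra is the Peirce decomposition of a Jordan frame, and all its off-diagonal blocks have the same dimension.
\end{enumerate}
This is the criterion \cite[Corollary 4.2]{cubics} that the paper uses. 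It gives $8n=2^{4n-1}$, hence $n=1$, directly. With it, your bound $t_n>27$ also goes through.
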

\begin{proof}
The classification of special rank-three Vinberg cones says
that a cone from the positive Clifford construction with
$\dim S^+=\dim S^-$ is symmetric exactly when the three
off-diagonal metric spaces have the same dimension
\cite[Corollary 4.2]{cubics}; equivalently its metric
nilalgebra is the one associated with a Euclidean rank-three
Jordan algebra \cite{jvnw}. Here the equality is $8n=2^{4n-1}$,
which holds at $n=1$ and fails thereafter: the right-hand
side is multiplied by 16 at each step while the left-hand
side grows linearly. At equality, real triality identifies
the three eight-dimensional spaces and gives the Albert
cone. The boundary statement is the standard Albert
rank-one description \cite{faraut,springer}.
\end{proof}
This proof uses the classification of the constructed Vinberg
cone. Koecher--Vinberg associates a Euclidean Jordan algebra
with a symmetric cone \cite{koecher,vinberg,faraut}; identifying
its product with a given product is a further step. For the
root tip, Theorem~\ref{th:identification} supplies that step.

The homogeneous tube $H_n^{\rm Cl}+i\Omega_n^{\rm Cl}$ carries
the positive special K\"ahler metric described in
Section~\ref{sec:supergravity}. Above level one, the classical
Albert construction of a projective plane is no longer available.
The boundary strata and their pairing with the dual cone remain
natural geometric objects. An incidence plane built from them
would need a definition of points and lines and a proof of the
incidence axioms; nonsymmetry of the cone settles neither its
existence nor the existence of a polarity.

\section{The level-one comparison}
\begin{theorem}\label{th:levelone}
For the objects defined in this paper, the following conditions
on $n\geq1$ are equivalent:
\begin{enumerate}
\item $\g_n$ satisfies the Jacobi identity;
\item all generalized roots have the same length;
\item $\Lambda_n$ contains a full-rank orthogonal unscaled $A_2$ frame;
\item the complex root-tip product $(T_n,\circ)$ is Jordan;
\item the transported positive cone $\Omega_n$, equivalently $\Omega_n^{\rm Cl}$, is symmetric;
\item $n=1$.
\end{enumerate}
At $n=1$ the frame is $A_2^4\subset E_8$, its index is nine,
and its glue code is the tetracode.
\end{theorem}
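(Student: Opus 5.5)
The plan is to prove that each of conditions (1)--(5) is equivalent to (6) separately, taking the earlier results as already established. No direct implications among (1)--(5) are needed. In each case, level one is the special point of a quantity that is monotone or dichotomous in $n$.

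For (1)$\Leftrightarrow$(6) I will cite Corollary~\ref{cor:imJ}. It says $\g_n$ is $\mathfrak e_8(\C)$ at $n=1$, and for $n\geq2$ the Jacobiators span the nonzero space $W_n$.

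For (2)$\Leftrightarrow$(6) I will use Lemma~\ref{lem:length}. Orthogonal roots have squared length $2$ and spinorial roots have squared length $n+1$, and these agree exactly when $n=1$.

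For (3)$\Leftrightarrow$(6), the case $n\geq2$ is Theorem~\ref{th:frameNo}, which gives $m\leq\lfloor N/3\rfloor<N/2$. At $n=1$, the lattice $\Lambda_1=D_8^+$ is even and unimodular of rank eight, hence equal to $E_8$. There I will exhibit the frame \eqref{eq:a2frame}. One option is to transport it through an isometry of the integral Petersson model with the coordinate $E_8$. Another is to write four orthogonal $A_2$ root pairs directly in coordinates, for instance $k_1-k_2,\ k_2-k_3$ and $k_4-k_5,\ k_5-k_6$, completed by two spinorial $A_2$'s orthogonal to these. The index-nine and tetracode-glue statements then follow from the determinant count $\det A_2^4=3^4$ against $\det E_8=1$, so the index is $3^2$. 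They also follow from the discriminant-form discussion after \eqref{eq:tetracode}: the glue is an isotropic two-dimensional ternary code of length four, which is the self-dual tetracode.

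For (4)$\Leftrightarrow$(6) I will cite Theorem~\ref{th:tipNonJordan}. For (5)$\Leftrightarrow$(6) I will cite Theorem~\ref{th:coneSymmetric}, which rests on the dimension equality $8n=2^{4n-1}$. Transport along $\varphi_n$ from Theorem~\ref{th:identification} gives the equivalence between $\Omega_n$ and $\Omega_n^{\rm Cl}$, since a linear isomorphism carries cone automorphisms and the dual cone across.

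I expect the main obstacle to be the only non-citation step, namely producing the explicit full-rank frame at $n=1$ inside the coordinate lattice $\Lambda_1$ rather than in the Petersson model. If the direct coordinate choice is awkward, I will fall back on the structural argument. Any even unimodular rank-eight lattice is isometric to $E_8$, and the $E_8$ root system contains $A_2^4$, for example through the maximal-rank subsystem $A_2\times E_6\supset A_2\times A_2^3$ obtained by the Borel--de Siebenthal procedure. That gives the frame without coordinates, and the index computation then proceeds as above.
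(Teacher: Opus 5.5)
Your proposal is correct and matches the paper's proof, which likewise proves each of (1)--(5) equivalent to $n=1$ separately by citing Lemma~\ref{lem:length}, Corollary~\ref{cor:imJ}, Theorem~\ref{th:frameNo} together with \eqref{eq:a2frame}, Theorem~\ref{th:tipNonJordan} and Theorem~\ref{th:coneSymmetric}. Your coordinate completion at $n=1$ works, for example with $k_7+k_8$ paired with $\tfrac12(1,1,1,1,1,1,-1,-1)$ and $k_7-k_8$ paired with $\tfrac12(1,1,1,-1,-1,-1,-1,1)$; it also makes explicit the isometry $\Lambda_1\simeq E_8$ that the paper's citation of the Petersson-model frame leaves implicit.
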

\begin{proof}
The root-length equality is $n+1=2$ by Lemma~\ref{lem:length}.
The remaining equivalences with $n=1$ are respectively
Corollary~\ref{cor:imJ}, Theorem~\ref{th:frameNo} and
\eqref{eq:a2frame}, Theorem~\ref{th:tipNonJordan}, and
Theorem~\ref{th:coneSymmetric}.
\end{proof}
The equivalences use the positive real form of
Theorem~\ref{th:identification} and the norm-two scale of the
lattice. Triality selects the same level: among $\so(8n)$,
only $\so(8)$ has an outer automorphism permuting its vector
and both half-spin representations. Balanced tetracode gradings
behave differently and persist for all $n$.

The locked families retain the fibre counts
\eqref{eq:lockedFibres} and the arithmetic statements of
Proposition~\ref{prop:e6frame}. The published paper also describes
$H$-algebras for the locked families \cite[Section 8 and Remark 8.3]{epPublished}.
Theorem~\ref{th:identification} treats $E_8^{(n)}$. Extending its
positive-real construction to the locked families requires their
real Clifford modules and product normalizations. Their different
nuclei likewise enter a separate automorphism calculation.

\section{Relation to theories with eight real supercharges}\label{sec:supergravity}
The cubic \eqref{eq:cubic}, restricted to its positive domain,
defines the supergravity models discussed here. By
Theorem~\ref{th:identification} it is the Newton cubic of the
EP tip on the chosen positive real form. We use $\mc N=2$
for eight real supercharges in four and five dimensions.
Spacelike reduction to three dimensions preserves these eight
supercharges, usually denoted $\mc N=4$. We recall the known
homogeneous models and give their relation to the tip product.

\subsection{Identification with the classified homogeneous series}
Set $y=u+w$, $z=u-w$ and $\Sigma=(s,t)\in S^+\oplus S^-$.
The cubic becomes
\begin{equation}\label{eq:dwpCubic}
 D=x(u^2-w^2-\|v\|^2)-u\|\Sigma\|^2
       +w(\|s\|^2-\|t\|^2)+2\langle C(v)s,t\rangle.
\end{equation}
The real symmetric matrices
\begin{equation}\label{eq:positiveGamma}
 \gamma_0=\begin{pmatrix}I&0\\0&-I\end{pmatrix},\qquad
 \gamma_i=\begin{pmatrix}0&C(e_i)^{\mathsf T}\\C(e_i)&0\end{pmatrix}
 \quad(1\leq i\leq8n)
\end{equation}
satisfy $\gamma_A\gamma_B+\gamma_B\gamma_A=2\delta_{AB}I$.
With $\eta=(w,v)$, the last two terms of
\eqref{eq:dwpCubic} are
\[\sum_A\eta^A\Sigma^{\mathsf T}\gamma_A\Sigma.\]
This is, after conventional linear rescalings and an overall
normalization, the de Wit--Van Proeyen Clifford cubic with
$q=8n$ and one irreducible real $\Cl(8n+1,0)$ module
\cite{dwp,dwvp}. In their notation it is
\begin{equation}\label{eq:Lseries}
 L(8n,1,0),\qquad \dim\Sigma=2^{4n},\qquad
 \dim\mc M_{5,n}=8n+2+2^{4n}=t_n-1.
\end{equation}
The two Clifford chiralities can be interchanged by the
corresponding convention change; the multiplicities are
$(P,\dot P)=(1,0)$ here. The level $n=1$ gives the octonionic magical model. Higher
levels select homogeneous nonsymmetric members of the same
classification.

\subsection{Five dimensions: cubic couplings and the positive branch}
Choose linear coordinates $h^I$, $1\leq I\leq t_n$, on
$H_n^{\rm Cl}$ and define the constant symmetric tensor
$C_{IJK}$ by $D(h)=C_{IJK}h^Ih^Jh^K$. We absorb the usual factorial into $C_{IJK}$.
The scalar fields of ungauged five-dimensional
Maxwell--Einstein supergravity lie on
$D(h)=1$ inside $\Omega_n^{\rm Cl}$, with
\begin{equation}\label{eq:5dmetric}
 a_{IJ}(h)=-\tfrac13\partial_I\partial_J\log D(h)\big|_{D=1},
 \qquad g_{ab}=\tfrac32a_{IJ}\partial_a h^I\partial_b h^J.
\end{equation}
An overall rescaling of the scalar metric is convention dependent;
the displayed choice is consistent with the usual
$h_a^I=-\sqrt{3/2}\,\partial_a h^I$ convention.
There are $t_n$ vector fields including the graviphoton and
$n_V^{(5)}=t_n-1$ vector multiplets. The same $C_{IJK}$
fixes the abelian Chern--Simons couplings
\cite{gst,dwp}. Equation~\eqref{eq:hessian} and relative invariance give the
positivity of the kinetic metrics on this branch.

Compact $\operatorname{Spin}(8n)$ acts simultaneously on
$v,s,t$ and preserves the cubic. The positive Clifford model
also has the familiar compact $\operatorname{Spin}(8n+1)$
action: $(w,v)$ is its vector and $\Sigma$ its spinor.
This corresponds to the $\mathfrak b_{4n}$ induced tip
derivations of \cite[Theorem 11.4]{epPublished}; at $n=1$
the Albert algebra has the larger compact $F_4$ symmetry.
In the classified homogeneous scalar model the Lorentz
factor associated with $(u,w,v)$ is locally $SO(8n+1,1)$,
and further transformations complete the known scalar symmetries
\cite{dwvp}. These groups act on the real cubic and scalar manifold.
The group $\Aut(\g_n)=\HSpin(8n+8,\C)$ for $n\geq2$ instead
preserves the ambient complex EP bracket.

\subsection{Four dimensions: the \texorpdfstring{$r$}{r}-map and special K\"ahler geometry}
Circle reduction gives $n_V^{(4)}=t_n$ vector multiplets,
including the Kaluza--Klein multiplet. With homogeneous
special coordinates $X^\Lambda$, $\Lambda=0,1,\ldots,t_n$,
take the degree-two prepotential
\begin{equation}\label{eq:prepotential}
 F(X)=-\frac{D(X^1,\ldots,X^{t_n})}{X^0}.
\end{equation}
Here $D$ is extended holomorphically as its real-coefficient
cubic polynomial: the squared norms in its real expression
become bilinear quadratic polynomials, with no complex
conjugation in the prepotential.
For $X^0=1$, write $z^I=b^I+i y^I$ with
$y\in\Omega_n^{\rm Cl}$. Direct evaluation gives
\begin{equation}\label{eq:Kahler}
 K=-\log\bigl[i(\bar X^\Lambda F_\Lambda
                  -X^\Lambda\bar F_\Lambda)\bigr]
   =-\log[8D(y)],\qquad
 g_{I\bar J}=\tfrac14\partial_{y^I}\partial_{y^J}[-\log D(y)].
\end{equation}
The prepotential thus gives a positive projective special
K\"ahler metric on this tube \cite{cecotti,dwp,dwvp,cortes}.
The determinant-one linear group, real translations of $b$
and simultaneous dilations act transitively; a dilation changes
$K$ only by a constant. The special K\"ahler structure here
comes from the prepotential. It is distinct from the general
choice of a Bergman metric on a Siegel domain.

For the vector couplings, consider the period matrix.
For $F_{\Lambda\Sigma}=\partial_\Lambda\partial_\Sigma F$,
use the usual period matrix
\begin{equation}\label{eq:periodMatrix}
 \mc N_{\Lambda\Sigma}=\bar F_{\Lambda\Sigma}
 +2i\frac{(\operatorname{Im}F_{\Lambda\Gamma})X^\Gamma
          (\operatorname{Im}F_{\Sigma\Delta})X^\Delta}
 {X^\Theta(\operatorname{Im}F_{\Theta\Pi})X^\Pi}.
\end{equation}
At vanishing axions, $z=iy$, put
$H_{IJ}=-\partial_I\partial_J\log D(y)$. Euler's identities
then give
\begin{equation}\label{eq:periodPositive}
 \mc N_{00}=-iD(y),\quad \mc N_{0I}=0,\quad
 \mc N_{IJ}=-iD(y)H_{IJ}.
\end{equation}
Thus $-\operatorname{Im}\mc N$ is positive definite on
the chosen branch. Real axion translations transport this
positivity by the associated symplectic transformation.
The scalar and vector kinetic terms are therefore positive on
the same branch $y\in\Omega_n^{\rm Cl}$.

There are $t_n+1$ electric field strengths and the same
number of magnetic duals, hence a real symplectic charge
space of dimension $2t_n+2$. The duality group acts symplectically on the equations of motion
and Bianchi identities. A gauging requires a Lie gauge algebra,
an embedding tensor satisfying the closure constraints and a
compatible matter representation. These are additional data to
the ungauged models considered here.

An integral electric--magnetic charge lattice would have rank
$2t_n+2$ and an integral symplectic pairing, whereas
$\Lambda_n$ has rank $N$. Relating the tetracode to charge
quantization or discrete dualities would therefore require a
separate construction compatible with the positive real form
and cubic couplings.

\subsection{Three dimensions and the magical level}
Spacelike reduction and dualization of all vectors implement
the local $c$-map. The resulting quaternionic K\"ahler target
has quaternionic dimension $t_n+1$ and real dimension
$4(t_n+1)$ \cite{cecotti,dwvp}. Timelike reduction gives the corresponding para-quaternionic
K\"ahler geometry.
At $n=1$, the familiar symmetric spaces are, up to finite
central quotients,
\begin{equation}\label{eq:magicSpaces}
 \begin{array}{c|c|c}
 \text{dimension}&\text{scalar space}&\text{scalar dimension}\\\hline
 5&E_{6(-26)}/F_{4(-52)}&26\text{ real}\\
 4&E_{7(-25)}/(E_{6(-78)}\times U(1))&27\text{ complex}\\
 3&E_{8(-24)}/(E_{7(-133)}\times SU(2))&28\text{ quaternionic}
 \end{array}
\end{equation}
For the higher members of $L(8n,1,0)$ the spaces remain
homogeneous and are nonsymmetric. Representative dimensions are
\begin{center}
\begin{tabular}{rrrrrr}\toprule
$n$&$t_n$&$n_V^{(5)}$&$n_V^{(4)}$&$\dim Q$&$\dim_\R\mc M_3$\\\midrule
1&27&26&27&56&112\\
2&275&274&275&552&1104\\
3&4123&4122&4123&8248&16496\\
4&65571&65570&65571&131144&262288\\\bottomrule
\end{tabular}
\end{center}
Here $\dim Q$ is the four-dimensional real charge dimension.
The field counts in this subsequence grow as $2^{4n}=16^n$.
Here $n$ labels the Clifford module and the corresponding model.
The positive Riemannian metrics on these homogeneous scalar
spaces are geodesically complete. This follows from transitivity
of their isometry groups, since connected homogeneous Riemannian
manifolds are complete; for the Clifford hypersurfaces and their
concatenated $q=c\circ r$ map it is also established directly in
\cite{ac2025}. The completeness statement applies to the Riemannian targets
obtained by spacelike reduction.

\subsection{Attractors and the rational dual invariant}
In five dimensions, let $q_I$ denote electric charges and
$Z_5=q_Ih^I$ the central charge in a fixed normalization.
A constrained critical point on $D(h)=1$ obeys
\begin{equation}\label{eq:5dattractor}
 q_I=\lambda\partial_I D(h),\qquad
 \lambda=Z_5/3,
\end{equation}
where Euler's identity $h^I\partial_ID=3D$ gives the second
equation. Let $q^\sharp$ be the vector defined by
$B(q^\sharp,u)=q_Iu^I$. On the positive charge branch
$q^\sharp\in(\Omega_n^{\rm Cl})^*$ with $\lambda>0$,
logarithmic duality \eqref{eq:logDual} gives
\begin{equation}\label{eq:5dinverse}
 \lambda^3=D^*(q^\sharp),\qquad
 h=\lambda\,\nabla_B\log D^*(q^\sharp).
\end{equation}
Indeed $q^\sharp=\lambda\nabla_BD(h)=
\lambda\nabla_B\log D(h)$ on $D=1$, and degree-three
homogeneity plus \eqref{eq:logDual} proves both statements.
At higher levels $D^*$ is rational, so its full derivative enters
this inversion. Other charge branches require their own regularity
and sign analysis.

In four dimensions use charges $Q=(p^\Lambda,q_\Lambda)$
and convention
\begin{equation}\label{eq:centralCharge}
 Z_4=e^{K/2}(q_\Lambda X^\Lambda-p^\Lambda F_\Lambda),
 \qquad V_{\rm BH}=|Z_4|^2+
 g^{I\bar J}D_IZ_4\,\overline{D_JZ_4}.
\end{equation}
Regular supersymmetric attractors satisfy $D_IZ_4=0$;
in units $G_4=1$, their two-derivative entropy is
$S_{\rm BH}=\pi|Z_{4,*}|^2$.
At the magical level the 56-dimensional charge representation
has the exceptional Freudenthal quartic $I_4$, yielding
$S_{\rm BH}=\pi\sqrt{|I_4(Q)|}$ for regular large extremal
orbits in the conventional normalization. The Jordan and Freudenthal identities underlie this quartic
description.

For nonsymmetric homogeneous models, the construction and
inversion of attractor equations are treated in
\cite{ams,amsErr,vanGeemen}. The degree-three dual function
generally has a rational correction, and the 2023 erratum
changes the corresponding derivative and entropy expressions.
For this nonsymmetric family, the charge invariant entering the
entropy calculation need not be a polynomial quartic. The calculation
uses the corrected dual function \eqref{eq:dual} on the selected
physical charge branch. The entropy can involve a square root of
a rational homogeneous charge invariant.

The comparison therefore places the published EP tip product
and its Newton cubic in the known Clifford series, with a
specified positive real form and domain. The identity failures
above level one explain why the magical adjoint and attractor
formulas need modification in the higher models.

\section{Exact checks and reproducibility}\label{sec:checks}
The ancillary directory \texttt{anc/} contains four programs
and their recorded outputs. They check the formulas and counts
at the levels listed below using exact arithmetic. The results
valid at every level have been proved in the preceding sections.

\texttt{exact\_checks.py} uses Python's standard library and
rational arithmetic. It generates roots at $n=1,2,3$,
uses \eqref{eq:coefficients} and \eqref{eq:epsilon}, and checks
the lattice-root census, tetracode ambient and tip fibres,
the fixed frame, idempotents, unit, block dimensions, and
the opposite-pair Jacobiator for a fixed spinorial root
against every other nonopposite spinorial root. It checks
all ordered tip product pairs at $n=1,2$ for closure,
commutativity and degree additivity. The explicit Jacobi
and Jordan witnesses are checked at $n=2,3$.
\begin{center}
\begin{tabular}{rrrrr}\toprule
$n$&$|\Phi|$&$t_n$&tip product pairs&opposite-pair tests\\\midrule
1&240&27&729&126\\
2&2312&275&75625&2046\\
3&33248&4123&0&32766\\\bottomrule
\end{tabular}
\end{center}
The zero in the product column indicates that the exhaustive
pair check was not run at $n=3$. The program also checks the 27 congruence
residues and supplies a scaled $D_8$ frame certificate.

\texttt{enumerate\_gradings.cpp} enumerates all coordinate
value multisets of size $N$ from the nine elements of
$\F_3^2$. Orthogonal root counts are computed directly;
spinorial counts use an exact transfer matrix on degree
and sign parity. The results are
\begin{center}
\begin{tabular}{rrrrr}\toprule
$n$&multisets&balanced nonzero fibres&least kernel roots&
balanced, no spinorial kernel\\\midrule
1&12870&353&18&16\\
2&125970&353&66&16\\
3&735471&978&130&16\\
4&3108105&2274&216&16\\\bottomrule
\end{tabular}
\end{center}
The multiset total is $\binom{N+8}{8}$. ``Kernel roots''
counts only root vectors of degree zero, excluding the
Cartan dimension. The last column at these four levels
is the tetracode family with signed choices on its four
nonzero projective lines. The equivalence relation is coordinate permutation with degree
labels fixed. Classification under the full automorphism group,
or of arbitrary $\F_3^2$ gradings, is a separate problem.

\texttt{clifford\_checks.py} constructs exact real Clifford generators
at $n=1,2$ as signed permutation matrices. It verifies every
generator anticommutator on every basis vector, checks the
Cholesky cubic and inverse diagonal parameters, and tests
both logarithmic duality identities at a specified positive
point. At $n=2$ it also verifies the real non-Jordan witness
and adjoint failure; at $n=1$ the tested adjoint identity holds.
These runs test the two real models at the points just described.
The root-product comparison is handled by the next program.

\texttt{root\_clifford\_bridge\_checks.py} derives its gamma
operators directly from \eqref{eq:tipproduct}.
It checks every vector-generator pair on the entire spinorial
tip basis at $n=1,2,3$, with respectively $1024$, $65536$
and $2359296$ exact anticommutator checks.
It verifies the trace-adjoint relation, the normalization
fixture in Theorem~\ref{th:identification} and the published
tip's signed coordinate map. All $729$ and $75625$
ordered tip products at $n=1,2$ are also checked against
the normalized matrix-block formulas, including the spinor
pairing into $V$. No exhaustive matrix-product pair check
is claimed at $n=3$. The uniform isomorphism and real-form
existence are proved analytically in
Theorem~\ref{th:identification}.

The JSON and text files record the results and their scope.
The Python result files also record the source hashes. Running
the programs requires Python 3 and a C++17 compiler; no
third-party package is needed.

\section{Conclusions and further questions}
The grading and arithmetic constructions separate above level
one. Root-additive brackets continue to respect the tetracode
character, and the $[4,2,3]_3$ code gives balanced fibres at every
level. By contrast, all norm-two vectors then lie in $D_N$.
Orthogonal $A_2$ subsystems need disjoint three-coordinate
supports, which rules out a full-rank unscaled frame. The
Jacobi and Jordan witnesses show directly where the classical
algebraic identities fail.

The associative trace and a root calculation give a Clifford
proof of the published $H$-algebra realization. This provides a
positive real form of the tip and places its Newton cubic in the
known $L(8n,1,0)$ series. Cholesky coordinates identify the positive
cone, and the rational dual invariant gives the corrected
attractor inversion above the magical level.

The next questions concern compatibility and classification.
How do the character gradings split into orbits under the full
automorphism group? Which admit nonuniform refinements or are
compatible with an integral order and the transported real
structure? The spinorial coset also leaves room for integral
and boundary constructions beyond norm-two frames. Any incidence
geometry built from that boundary would need its own points,
lines and incidence theorem.

\section*{Use of generative AI}
During the preparation of this work the authors used Claude (Anthropic)
for code generation, exploratory computations, prose editing and the
preparation of the final source, and ChatGPT (OpenAI) for a mathematical
audit, comparison with the published literature, code generation and rewriting.
The ancillary files document the computational checks and their
scope. The authors are responsible for the mathematical arguments,
the interpretation of the computations and the attribution of results.
\section*{Acknowledgments}
The authors are grateful to P. Truini for introducing D.C.
to Exceptional Periodicity, and for many conversations on Magic Star
algebras and on Vinberg $T$-algebras.


\begin{thebibliography}{99}
\bibitem{adams} J. F. Adams, \emph{Lectures on Exceptional Lie Groups},
University of Chicago Press, 1996.
\bibitem{barton} C. H. Barton and A. Sudbery, Magic squares and matrix models
of Lie algebras, \emph{Adv. Math.} \textbf{180} (2003), 596--647;
\href{https://arxiv.org/abs/math/0203010}{arXiv:math/0203010}.
\bibitem{conway} J. H. Conway and N. J. A. Sloane, \emph{Sphere Packings,
Lattices and Groups}, 3rd ed., Springer, 1999.
\bibitem{corradettiMay} D. Corradetti, Integral elements of Okubo algebra
and the $E_8$-lattice, \href{https://arxiv.org/abs/2605.09333}{arXiv:2605.09333},
version 2 (2026).
\bibitem{corradettiIntegral} D. Corradetti and A. Marrani,
Integral Orders for the Okubo Algebra and Idempotent Geometries of the $E_8$ Lattice,
\href{https://arxiv.org/abs/2608.27762}{arXiv:2608.27762}, version 1 (2026).
\bibitem{coxeter} H. S. M. Coxeter, Integral Cayley numbers,
\emph{Duke Math. J.} \textbf{13} (1946), 561--578.
\bibitem{draperMartin} C. Draper and C. Mart\'in, Gradings on the Albert
algebra and on $\mathfrak f_4$, \emph{Rev. Mat. Iberoam.}
\textbf{25} (2009), 841--908.
\bibitem{elduqueComposition} A. Elduque, Composition algebras,
in A. Makhlouf (ed.), \emph{Algebra and Applications 1}, ISTE--Wiley,
2021, pp. 27--57; \href{https://arxiv.org/abs/1810.09979}{arXiv:1810.09979}.
\bibitem{elduqueOrder} A. Elduque, Order three elements in $G_2$ and
idempotents in symmetric composition algebras, \emph{Canad. J. Math.}
\textbf{70} (2018), 1038--1075;
\href{https://arxiv.org/abs/1705.01293}{arXiv:1705.01293}.
\bibitem{elduqueSymmetric} A. Elduque, Gradings on symmetric composition
algebras, \emph{J. Algebra} \textbf{322} (2009), 3542--3579;
\href{https://arxiv.org/abs/0809.1922}{arXiv:0809.1922}.
\bibitem{elduqueOkubo} A. Elduque, Okubo algebras: automorphisms,
derivations and idempotents, \emph{Contemp. Math.} \textbf{652} (2015),
61--73; \href{https://arxiv.org/abs/1310.5455}{arXiv:1310.5455}.
\bibitem{elduqueJordan} A. Elduque, Jordan gradings on exceptional simple
Lie algebras, \emph{Proc. Amer. Math. Soc.} \textbf{137} (2009),
4007--4017; \href{https://arxiv.org/abs/0810.2737}{arXiv:0810.2737}.
\bibitem{elduqueKochetov} A. Elduque and M. Kochetov,
\emph{Gradings on Simple Lie Algebras}, Mathematical Surveys and
Monographs 189, American Mathematical Society, 2013.
\bibitem{elduqueMyung} A. Elduque and H. C. Myung, On flexible composition
algebras, \emph{Comm. Algebra} \textbf{21} (1993), 2481--2505.
\bibitem{faraut} J. Faraut and A. Kor\'anyi, \emph{Analysis on Symmetric Cones},
Oxford University Press, 1994.
\bibitem{jvnw} P. Jordan, J. von Neumann and E. Wigner, On an algebraic
generalization of the quantum mechanical formalism,
\emph{Ann. of Math.} \textbf{35} (1934), 29--64.
\bibitem{koecher} M. Koecher, Positivit\"atsbereiche im $\R^n$,
\emph{Amer. J. Math.} \textbf{79} (1957), 575--596.
\bibitem{kollross} A. Kollross, The bracket of the exceptional Lie
algebra $E_8$, \href{https://arxiv.org/abs/2504.16513}{arXiv:2504.16513},
version 3 (16 May 2026).
\bibitem{loos} O. Loos, \emph{Jordan Pairs}, Lecture Notes in Mathematics
460, Springer, 1975.
\bibitem{marraniT} A. Marrani, Vinberg's $T$-algebras. From Exceptional
Periodicity to black hole entropy, \emph{SciPost Phys. Proc.}
\textbf{14} (2023), 035;
\href{https://arxiv.org/abs/2312.12371}{arXiv:2312.12371}.
\bibitem{marraniTruini} A. Marrani and P. Truini, Exceptional Lie algebras,
$SU(3)$ and Jordan pairs. Part 2: Zorn-type representations,
\emph{J. Phys. A} \textbf{47} (2014), 265202;
\href{https://arxiv.org/abs/1403.5120}{arXiv:1403.5120}.
\bibitem{okubo} S. Okubo, Pseudo-quaternion and pseudo-octonion algebras,
\emph{Hadronic J.} \textbf{1} (1978), 1250--1278.
\bibitem{petersson} H. P. Petersson, Eine Identit\"at f\"unften Grades,
der gewisse Isotope von Kompositions-Algebren gen\"ugen,
\emph{Math. Z.} \textbf{109} (1969), 217--238.
\bibitem{springer} T. A. Springer and F. D. Veldkamp,
\emph{Octonions, Jordan Algebras and Exceptional Groups}, Springer, 2000.
\bibitem{truini} P. Truini, Exceptional Lie algebras, $SU(3)$ and Jordan
pairs, \emph{Pacific J. Math.} \textbf{260} (2012), 227--243;
\href{https://arxiv.org/abs/1112.1258}{arXiv:1112.1258}.
\bibitem{ep1} P. Truini, A. Marrani and M. Rios,
Exceptional Periodicity and Magic Star algebras. I: Foundations,
\href{https://arxiv.org/abs/1909.00357}{arXiv:1909.00357}.
\bibitem{ep2} P. Truini, A. Marrani and M. Rios,
Exceptional Periodicity and Magic Star algebras. II: Gradings and
HT-algebras, \href{https://arxiv.org/abs/1910.07914}{arXiv:1910.07914}.
\bibitem{epPublished} P. Truini, A. Marrani, M. Rios and W. de Graaf,
Exceptional Periodicity and Magic Star algebras: generalized roots,
gradings, Hermitian Vinberg $T$-algebras and their derivations,
\emph{Expo. Math.} \textbf{43} (2) (2025), 125621;
\href{https://doi.org/10.1016/j.exmath.2024.125621}{doi:10.1016/j.exmath.2024.125621}.
\bibitem{epQuantum} P. Truini, A. Marrani and M. Rios, Magic Star and
Exceptional Periodicity: an approach to quantum gravity,
\emph{J. Phys. Conf. Ser.} \textbf{1194} (2019), 012106;
\href{https://arxiv.org/abs/1811.11202}{arXiv:1811.11202}.
\bibitem{epStar} P. Truini, M. Rios and A. Marrani, The Magic Star of
Exceptional Periodicity, \emph{Contemp. Math.} \textbf{721} (2019),
277--297; \href{https://arxiv.org/abs/1711.07881}{arXiv:1711.07881}.
\bibitem{vinberg} E. B. Vinberg, The theory of convex homogeneous cones,
\emph{Trans. Moscow Math. Soc.} \textbf{12} (1963), 340--403.
\bibitem{ac} D. V. Alekseevsky and V. Cort\'es, Special Vinberg cones,
\emph{Transform. Groups} \textbf{26} (2021), 377--402;
\href{https://doi.org/10.1007/s00031-021-09649-w}{doi:10.1007/s00031-021-09649-w}.
\bibitem{ams} D. V. Alekseevsky, A. Marrani and A. Spiro, Special
Vinberg cones and the entropy of BPS extremal black holes,
\emph{JHEP} \textbf{11} (2021), 100;
\href{https://arxiv.org/abs/2107.06797}{arXiv:2107.06797}, corrected version 4.
\bibitem{ac2025} D. V. Alekseevsky and V. Cort\'es, Homogeneous
cones associated with Clifford modules, in D. R. Wood, A. M. Etheridge,
J. de Gier and N. Joshi (eds.), \emph{2023 MATRIX Annals}, MATRIX
Book Series 6, Springer, 2025, pp. 113--124;
\href{https://doi.org/10.1007/978-3-031-76738-8_7}{doi:10.1007/978-3-031-76738-8\_7}.
\bibitem{rank4} D. V. Alekseevsky and P. Osipov,
Special Vinberg cones of rank 4, \emph{JHEP} \textbf{01} (2026), 001;
\href{https://doi.org/10.1007/JHEP01(2026)001}{doi:10.1007/JHEP01(2026)001}.
\bibitem{amsErr} D. V. Alekseevsky, A. Marrani and A. Spiro,
Erratum: Special Vinberg cones and the entropy of BPS extremal
black holes, \emph{JHEP} \textbf{06} (2023), 192;
\href{https://doi.org/10.1007/JHEP06(2023)192}{doi:10.1007/JHEP06(2023)192}.
\bibitem{cubics} D. V. Alekseevsky, A. Marrani and A. Spiro,
Special Vinberg cones, invariant admissible cubics and special
real manifolds, in \emph{The Diverse World of PDEs: Geometry and
Mathematical Physics}, \emph{Contemp. Math.} \textbf{788} (2023), 31--51;
\href{https://doi.org/10.1090/conm/788/15818}{doi:10.1090/conm/788/15818};
\href{https://arxiv.org/abs/2301.01168}{arXiv:2301.01168}, version 2.
\bibitem{abs} M. F. Atiyah, R. Bott and A. Shapiro,
Clifford modules, \emph{Topology} \textbf{3}, suppl. 1 (1964), 3--38.
\bibitem{gst} M. G\"unaydin, G. Sierra and P. K. Townsend,
The geometry of $N=2$ Maxwell--Einstein supergravity and
Jordan algebras, \emph{Nucl. Phys. B} \textbf{242} (1984), 244--268.
\bibitem{cecotti} S. Cecotti, Homogeneous K\"ahler manifolds and
$T$-algebras in $N=2$ supergravity and superstrings,
\emph{Comm. Math. Phys.} \textbf{124} (1989), 23--55.
\bibitem{dwp} B. de Wit and A. Van Proeyen, Special geometry,
cubic polynomials and homogeneous quaternionic spaces,
\emph{Comm. Math. Phys.} \textbf{149} (1992), 307--334;
\href{https://arxiv.org/abs/hep-th/9112027}{arXiv:hep-th/9112027}.
\bibitem{dwvp} B. de Wit, F. Vanderseypen and A. Van Proeyen,
Symmetry structure of special geometries,
\emph{Nucl. Phys. B} \textbf{400} (1993), 463--524;
\href{https://arxiv.org/abs/hep-th/9210068}{arXiv:hep-th/9210068}.
\bibitem{cortes} V. Cort\'es, Homogeneous special geometry,
\emph{Transform. Groups} \textbf{1} (1996), 337--373.
\bibitem{vanGeemen} B. van Geemen, A. Marrani and F. Russo,
BPS black hole entropy and attractors in very special geometry.
Cubic forms, gradient maps and their inversion,
\emph{JHEP} \textbf{12} (2021), 195;
\href{https://arxiv.org/abs/2009.10647}{arXiv:2009.10647}.
\end{thebibliography}
\end{document}